\newtheorem{lm}{Lemma}[section]
\newtheorem{thm}[lm]{Theorem}
\newtheorem{prop}[lm]{Proposition}
\newtheorem*{dkp}{DKP Logarithmic Lemma}
\theoremstyle{definition}
\newtheorem{oss}[lm]{Remark}
\newtheorem*{ack}{Acknowledgements}
\numberwithin{equation}{section}
\author{Lorenzo Brasco}
\address{Aix-Marseille Universit\'e, CNRS, Centrale Marseille, I2M, UMR 7373, 13453 Marseille, France}
\email{lorenzo.brasco@univ-amu.fr}
\author{Giovanni Franzina}
\address{Department Mathematik, Universit\"at Erlangen-N\"urnberg Cauerstrasse 11,
91058 Erlangen, Germany}
\email{franzina@math.fau.de}
\keywords{Nonlinear eigenvalue problems; uniqueness of eigenfunctions; Hardy inequalities; maximum principle; nonlocal equations}
\subjclass[2010]{47J10, 39B62, 35B50}
\title[Convexity properties and Picone-type inequalities]{Convexity properties of Dirichlet integrals and Picone-type inequalities}
\begin{document}

\maketitle

\begin{abstract}
We focus on three different convexity principles for local and nonlocal variational integrals. We prove various generalizations of them, as well as their equivalences.
Some applications to nonlinear eigenvalue problems and Hardy-type inequalities are given. We also prove a measure-theoretic minimum principle for nonlocal and nonlinear positive eigenfunctions.
\end{abstract}


\section{Introduction}
\label{sec:1}

\subsection{A general overview} The aim of this paper is to study three elementary convexity principles which have found many applications in eigenvalue problems
and functional inequalities. In particular, we will focus on their mutual relations and prove that they are indeed equivalent. In order to smoothly introduce the reader to the subject and clarify the scopes of the paper, we start with the three basic examples which will serve as a model for the relevant generalizations considered in the sequel:
\begin{itemize}
\item the first one is the convexity of the Hamiltonian function for a system of one free massive particle in classical mechanics 
\[
\frac{1}{2}\, \frac{|\phi|^2}{m},
\]
which is jointly convex as a function both of the mass $m>0$ and of the conjugate momentum $\phi\in\mathbb{R}^N$;
\vskip.2cm
\item the second one is the convexity of the quantity $|\nabla u|^2$ along curves of the type
\[
\sigma_t=\Big((1-t)\, u^2 + t\, v^2\Big)^\frac{1}{2},\qquad t\in[0,1],
\]
where $u,v\ge 0$ are differentiable functions, {\em i.e.}
\[
|\nabla \sigma_t|^2\le (1-t)\,|\nabla u|^2+t\,|\nabla v|^2.
\]
This is also sometimes called {\it hidden convexity};
\vskip.2cm
\item the third and last one is the so-called\footnote{This formula is called ``identity'' even if it is an inequality, because the difference of the two terms can be written as
\[
\left\langle \nabla u,\nabla\left(\frac{v^2}{u}\right)\right\rangle- |\nabla v|^2
=-\left|\nabla v-\nabla u\, \frac{v}{u}\right|^2,
\]
which is indeed non positive. The latter is the equality which appears in the original
paper \cite{Pi} by Mauro Picone, after whom the formula is named. The identity is used there to obtain comparison principles for ordinary differential equations of Sturm-Liouville type.} {\it Picone identity}
\[
\left\langle \nabla u,\nabla\left(\frac{v^2}{u}\right)\right\rangle\le |\nabla v|^2,
\]
where again $u,v\ge 0$ are differentiable functions, this time with $u>0$.
\end{itemize}
\vskip.2cm
As a well-known consequence of the convexity of the previous Hamiltonian, we get that
\[
\rho\mapsto \frac{|\nabla \rho(x)|^2}{\rho(x)},
\]
is convex for every $x$. This in turn implies convexity of the {\it Fisher information} 
with respect to a reference probability measure\footnote{Here $\mu\ll \nu$ means that $\mu$ is absolutely continuous with respect to $\nu$. We denote by $d\mu/d\nu$ the Radon-Nykodim derivative of $\mu$ with respect to $\nu$.} $\nu$. For every probability measure $\mu$, this functional is given by
\[
\mathcal{J}(\mu\vert\nu) = \int \left|\nabla \log \rho\right|^2\,\rho\, d\nu=\int \frac{|\nabla \rho|^2}{\rho}\, d\nu,\qquad \mbox{ if } \mu\ll \nu\, \mbox{ and }\, \rho=\frac{d\mu}{d\nu},
\]
and observe that the latter can also be re-written as
\[
\mathcal{J}(\mu\vert\nu)= 4\, \int \left|\nabla \sqrt{\rho}\,\right|^2\,d\nu,\qquad \mbox{ if } \mu\ll \nu\, \mbox{ and }\, \rho=\frac{d\mu}{d\nu}.
\]
From the previous we thus get for $\rho_t=(1-t)\, \rho_0+t\,\rho_1$
\[
\int \left|\nabla \sqrt{\rho_t}\,\right|^2\,d\nu\le (1-t)\, \int \left|\nabla \sqrt{\rho_0}\,\right|^2\,d\nu+(1-t)\, \int \left|\nabla \sqrt{\rho_1}\,\right|^2\,d\nu.
\]
Thus in particular if 
\[
\rho_t=(1-t)\, u^2 + t\, v^2,
\]
we then obtain that the Dirichlet integral is convex along curves of the form
\[
\sigma_t=\sqrt{(1-t)\, u^2 + t\, v^2},\qquad t\in[0,1],
\]
which is the {\it hidden convexity} exposed above.
This striking convexity property of the Dirichlet integral seems to have been first noticed by Benguria in his Ph.D. dissertation (see \cite{BBL} for example).
Note that along the curve of functions $\sigma_t$ we have
\[
\|\sigma_t\|^2_{L^2}=(1-t)\, \|u\|_{L^2}^2+t\, \|v\|^2_{L^2},
\]
then if $u,v$ belong to the unit sphere of $L^2$, the same holds true for $\sigma_t$. The latter incidentally happens to be a constant speed geodesic for the metric defined by 
\[
d_2(u,v) = \int \left|u^2-v^2\right|\,dx,\qquad u,v\in L^2. 
\]
As one should expect, the geodesic convexity described above is helpful to get uniqueness results in eigenvalue problems. We recall that eigenvalues of the Dirichlet-Laplace operator $-\Delta$ on an open set $\Omega\subset\mathbb{R}^N$ such that $|\Omega|<\infty$ are defined as the critical points of the Dirichlet integral on the manifold
\[
\mathcal{S}_2(\Omega)=\left\{u\in W^{1,2}_0(\Omega)\, :\, \int_\Omega u^2\, dx=1\right\}.
\]
This constraint naturally introduces Lagrange multipliers, which by homogeneity are the eigenvalues of the
Laplace operator, i.e. any constrained critical point $u$ is a weak solution of
\begin{equation}
\label{helmoltz}
-\Delta u = \lambda\, u, \quad \mbox{ in }\ \Omega,\qquad\qquad u=0, \quad \mbox{ on }\partial\Omega.
\end{equation}
One says that the function $u$ is an {\it eigenfunction} corresponding to the eigenvalue $\lambda$. 
\par
Then the idea is very simple: for a minimum convex problem, critical points are indeed minimizers. This means that hidden convexity trivializes the global analysis for the Dirichlet energy on $\mathcal{S}_2(\Omega)\cap\{u\ge 0\}$ and there cannot be any constant sign critical point $u$ other than its global minimizer.
Since the strong minimum principle states that any constant sign eigenfunction (up to a sign) is strictly positive, this imposes any eigenfunction $v\ge0$ to be associated with the least
eigenvalue
\[
\lambda_1(\Omega) = \min_{u\in W^{1,2}_0(\Omega)} \left\{\int_\Omega |\nabla u|^2\,dx\, :\, \int_\Omega |u|^2\,dx=1\right\},
\]
which turns out to be simple as well.
\par
Another way to prove the same result would be precisely by means of Picone inequality. Let us call $u_1$ a first eigenfunction of $\Omega$, i.e. a function achieving $\lambda_1(\Omega)$. If $v\ge 0$ is a nontrivial eigenfunction with eigenvalue $\lambda$, then by strong minimum principle $v>0$ and by Picone inequality one would get
\[
\lambda=\lambda\,\int_\Omega v\, \frac{u_1^2}{v}=\int_\Omega \left\langle \nabla v,\nabla\left(\frac{u_1^2}{v}\right)\right\rangle\, dx\le \int_\Omega |\nabla u_1|^2\, dx=\lambda_1(\Omega),
\]
and thus $\lambda_1(\Omega)=\lambda$ since $\lambda_1(\Omega)$ is the minimal eigenvalue.
\vskip.2cm
Of course, in the case of the Laplace operator $-\Delta$
simplicity of $\lambda_1(\Omega)$ 
and uniqueness of constant sign eigenfunctions are plain consequences of the Hilbertian structure and of the strong minimum principle for supersolutions
of uniformly elliptic equations.
Indeed, any first eigenfunction $u_1$ must have constant sign and can never vanish on the interior of the connected set $\Omega$. Then any other eigenfunction has to be orthogonal in $L^2(\Omega)$ to $u_1$ (i.e. it has to change sign) unless it is proportional to $u_1$... 

\subsection{Aim of the paper} ...nevertheless, the advantage of the hidden convexity exposed above is that it does not involve any orthogonality concept and applies to general Dirichlet energies of the form
\begin{equation}
\label{energia_H}
\int_\Omega H(\nabla u)\,dx,
\end{equation}
where $z\mapsto H(z)$ is convex, even and positively homogeneous of degree $p>1$. Moreover, we prove in Proposition \ref{lm:hidden} that this remains true for the whole class of interpolating curves
\begin{equation}
\label{curvette}
\sigma_t=\Big((1-t)\, u^q+t\, v^q\Big)^\frac{1}{q},\qquad t\in[0,1].
\end{equation}
with $1\le q\le p$. We point out that $q=1$ corresponds to convexity in the usual sense and that {\it for $q>p$ the property ceases to be true}, see Remark \ref{oss:vital}.
\par
Like in the previous model case $p=q=2$ and $H(z)=|z|^2$, this permits to infer (see Theorem \ref{teo:eigen}) that the only constant sign critical points of \eqref{energia_H}
on the manifold
\[
\mathcal{S}_q(\Omega)=\left\{u\in W^{1,p}_0(\Omega)\, :\, \int_\Omega |u|^q\, dx=1\right\},\qquad 1<q\le p,
\]
are indeed the global minimizers, which are unique up to a sign. Observe that these critical points yield the following nonlinear version of Helmoltz equation \eqref{helmoltz}, i.e.
\[
-\mathrm{div\,}\nabla H(\nabla u) = \lambda\,
 \|u\|_{L^q(\Omega)}^{p-q}\, |u|^{q-2}\,u, \quad \mbox{ in } \Omega,\qquad\qquad u=0\quad \mbox{ on } \partial\Omega.
\]
We refer the reader to \cite{FL} for a detailed account on this nonlinear eigenvalue problem in the case $H(z)=|z|^p$.
\vskip.2cm
The previous general version of the hidden convexity can be seen again as a consequence of the joint convexity of the generalized Hamiltonian\footnote{The parameters $\beta$ and $q$ are linked through the relation
\[
\frac{\beta}{p}+\frac{1}{q}=1.
\]}
\begin{equation}
\label{HamC}
(m,\phi)\mapsto \frac{H(\phi)}{m^\beta},  \qquad \mbox{ for }\ 0\le \beta\le p-1,
\end{equation}
which in turn gives the convexity of the information functional
\[
\begin{split}
\mathcal{J}_{H,\beta}(\mu\vert\nu) = \int H\left(\nabla \log \rho\right)\,\rho^{p-\beta}\,d\nu, \qquad \mbox{ if } \mu\ll \nu \mbox{ and }\rho = \frac{d\mu}{d\nu},
\end{split}
\]
where the latter can also be written as
\[
\mathcal{J}_{H,\beta}(\mu|\nu)=\left(\frac{p}{p-\beta}\right)^p\,\int H\left(\nabla \rho^\frac{p-\beta}{p}\right)\,d\nu, \qquad \mbox{ if } \mu\ll \nu \mbox{ and }\rho = \frac{d\mu}{d\nu}.
\]
Finally, hidden convexity is in turn {\it equivalent} (see Section \ref{sec:3}) to the validity of the following generalized Picone inequality
\begin{equation}
\label{picone}
\left\langle \nabla H(\nabla u), \nabla\left( \frac{v^{q}}{u^{q-1}}\right) \right\rangle \le H(\nabla v)^\frac{q}{p}\, H(\nabla u)^\frac{q-p}{p},
\end{equation}
for all differentiable functions $u,v\ge 0$ with $u>0$, which is proved in Proposition \ref{lm:pitone}. Here again we consider $1< q\le p$. 
\par
We point out that equivalence between Picone-type inequalities and the hidden convexity property seemed to be unknown: indeed, one of the main scopes of this paper is to precise the relation between these two properties. 
\vskip.2cm
Up to now, we have discussed applications of these convexity principles to uniqueness issues in linear and nonlinear eigenvalue problems. But Picone-type inequalities can be used to prove a variety of different results. Without any attempt of completeness (we refer to the seminal paper of Allegretto and Huang \cite{AH} and to the recent paper \cite{Ja} for a significant account on the topic), we focus on applications to Hardy-type functional inequalities.
\par
The idea is that when $u$ solves a quasilinear equation with principal part given by
\[
-\mathrm{div} \nabla H(\nabla u),
\] 
by integrating \eqref{picone} and using the equation one can get a lower bound on $\int H(\nabla v)$ which does not depend on derivatives of $u$. This procedure is now well understood, see the recent paper \cite{DP}. 
\par
In Theorem \ref{thm:hardygen} this is applied to get a sharp anisotropic version of the Hardy inequality which reads as follows
\begin{equation}
\left(\frac{N+\gamma-p}{p}\right)^p\,\int_{\mathbb{R}^N} |v|^p\, F_*(x)^{\gamma-p}\,\, dx\le \int_{\mathbb{R}^{N}} F(\nabla v)^p\, F_*(x)^\gamma\, dx,\quad v\in C^\infty_0(\mathbb{R}^N\setminus\{0\}),
\end{equation}
for $1<p<N$ and $\gamma>p-N$. Here $F$ is any $C^1$ strictly convex norm
and $F_\ast$ denotes the corresponding dual norm, see Section~\ref{ssec:hardyloc}. For the case $\gamma=0$ a different proof, based on
symmetrization arguments, can be found in \cite{Van}.
\par
The same method can be used to get, for example, the following nonlocal version of the Hardy inequality
\begin{equation}
\label{hardyfract}
C \int_{\mathbb{R}^N} \frac{|v|^p}{ |x|^{s\,p}}\,dx \le \int_{\mathbb{R}^N}\int_{\mathbb{R}^N} \frac{\big| v(x)-v(y)\big|^p}{|x-y|^{N+s\,p}}\,dx\,dy\,,\qquad
\mbox{for all} \,\, v\in W^{s,p}_0(\mathbb{R}^N)\setminus\{0\}\,,
\end{equation}
by means of the following discrete version of Picone inequality
\begin{equation}
\label{piconefrac}
|u(x)-u(y)|^{p-2}\, (u(x)-u(y))\,
\left[\frac{v(x)^p}{u(x)^{p-1}}-\frac{v(y)^p}{u(y)^{p-1}}\right]
\le|v(x)-v(y)|^p,
\end{equation} 
with the choice $u(x) = |x|^{-s\,p}$. The constant $C=C(N,s,p)>0$ is sharp and for the sake of completeness we provide details about its computation in Appendix \ref{sec:B}.
We point out that \eqref{hardyfract} was proved by this same method by Frank and Seiringer in \cite{FS}, which is there called {\it ground state substitution}. 
Other fractional Hardy inequalities have appeared in the literature, see~\cite{BD,DF}. In particular, in the recent paper \cite{DdPW} Davila, del Pino and Wei observed that
a suitable fractional Hardy inequality on surfaces plays a role in the stability of nonlocal minimal
cones, see \cite[Corollary 11.1]{DdPW}.
\vskip.2cm
Noteworthy, not only does the Picone
inequality \eqref{picone} have its discrete counterpart \eqref{piconefrac}, but
also the hidden convexity of the Dirichlet integral has a nonlocal version.
Indeed, the Gagliardo seminorm
\[
\int\int \frac{|u(x)-u(y)|^p}{|x-y|^{N+s\,p}}\,dx\,dy
\]
turns out to be convex along curves of the type \eqref{curvette}, whenever $u,v$ are positive. Correspondingly, fractional Picone inequalities (or equivalently
hidden convexity) are used to get uniqueness results for positive eigenfunctions of the
integro-differential operator defined by the following principal value integral 
\[
(-\Delta_p)^s u(x) =2\ \mathrm{ p.v.} \int_{\mathbb{R}^N} \frac{|u(x)-u(y)|^{p-2} (u(x)-u(y))}{|x-y|^{N+s\,p}}\,dy.
\]
We point out that in order to get uniqueness results for constant sign nonlocal eigenfunctions, i.e. for solutions of
\[
(-\Delta_p)^s u=\lambda\, u^{q-1},\quad \mbox{ in }\Omega,\qquad\qquad u=0,\quad \mbox{ in }\mathbb{R}^N\setminus\Omega,
\] 
as in the local case, one needs to know that non-negative eigenfunctions are indeed strictly positive, at least for $\Omega$ connected.
A proof of this {\it strong minimum principle} for nonlocal eigenfunctions is provided in the appendix and seems to be new. The proof is based on a logarithmic lemma recently established in \cite{DKP}. 
After the acceptance of the present paper, we were informed about the interesting manuscript~\cite{DKP2} which contains the proof of a
weak Harnack's inequality for supersolutions (as well as a proper Harnack's inequality for solutions) of the operator $(-\Delta_p)^s$.
The interested reader may find in that article a more detailed account about nonlocal Harnack's inequalities. Nevertheless, those results are not used in this paper.

\subsection{Plan of the paper}
In Section \ref{sec:2} we present and prove some generalizations of the three convexity principles discussed above, then Section \ref{sec:3} is devoted to discuss their equivalences. Section \ref{sec:4} deals with the nonlocal/discrete versions of these convexities. Applications are then given in Sections \ref{sec:5} and \ref{sec:6}. The paper is concluded by two Appendices: one contains a new strong minimum principle for positive nonlocal eigenfunctions (Theorem \ref{teo:minprin}), while the second contains some computations related to the determination of the sharp constant in \eqref{hardyfract}.
\begin{ack}
We thank Agnese Di Castro, Tuomo Kuusi and Giampiero Palatucci for having kindly provided us a copy of their work \cite{DKP}, as well as Enea Parini for pointing out a flaw in a preliminary version of the proof of Theorem A.1. Part of this work has been done during the conferences ``{\it Linear and Nonlinear Hyperbolic Equations}'' and ``{\it Workshop on Partial Differential Equations and Applications}'', both held in Pisa and hosted by Centro De Giorgi and the Departement of Mathematics of the University of Pisa. We acknowledge the two institutions as well as the organizers for the nice atmosphere and the excellent working environment.
The second author has been supported by the ERC Starting Grant No. 258685 ``AnOptSetCon''.
\end{ack}

\section{Three convexity principles}
\label{sec:2}

We start with the a couple of classical results, which will be useful in order to prove some of the results of the paper. We give a proof for the reader's convenience.
\begin{lm}
\label{lm:convexity}
Let $F:\mathbb{R}^N\to\mathbb[0,+\infty)$ be a positively $1-$homogeneous function, i.e.
\[
F(\lambda\, z)=\lambda\, F(z),\qquad z\in\mathbb{R}^N,\ \lambda\ge 0,
\]
which is level-convex, i.e.
\begin{equation}
\label{levelconvex}
F((1-t)\,z+t\, w)\le \max\{F(z),F(w)\},\qquad z,w\in\mathbb{R}^N,\ t\in[0,1].
\end{equation}
Then $F$ is convex.
\end{lm}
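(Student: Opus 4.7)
The plan is to exploit positive $1$-homogeneity to rewrite an arbitrary convex combination as a nonnegative multiple of a convex combination of points lying on the unit level set $\{F=1\}$, and then to apply the level-convexity hypothesis on that unit level set.

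Concretely, fix $z,w\in\mathbb{R}^N$ and $t\in[0,1]$; the goal is to establish the inequality
\[
F((1-t)\,z+t\,w)\le (1-t)\,F(z)+t\,F(w).
\]
Assume first that $F(z)>0$ and $F(w)>0$. Set $\alpha=(1-t)\,F(z)$ and $\beta=t\,F(w)$, so that $\alpha+\beta>0$, and define $\widetilde{z}=z/F(z)$ and $\widetilde{w}=w/F(w)$. By $1$-homogeneity $F(\widetilde{z})=F(\widetilde{w})=1$, and
\[
(1-t)\,z+t\,w=\alpha\,\widetilde{z}+\beta\,\widetilde{w}
=(\alpha+\beta)\left(\frac{\alpha}{\alpha+\beta}\,\widetilde{z}+\frac{\beta}{\alpha+\beta}\,\widetilde{w}\right).
\]
Using $1$-homogeneity once more and then \eqref{levelconvex} on the bracket, I get
\[
F((1-t)\,z+t\,w)=(\alpha+\beta)\,F\!\left(\frac{\alpha}{\alpha+\beta}\,\widetilde{z}+\frac{\beta}{\alpha+\beta}\,\widetilde{w}\right)\le(\alpha+\beta)\max\{F(\widetilde{z}),F(\widetilde{w})\}=\alpha+\beta,
\]
which is exactly $(1-t)\,F(z)+t\,F(w)$.

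The only delicate point is handling the degenerate case in which $F(z)=0$ or $F(w)=0$, since then we cannot normalize. If both vanish, level-convexity already gives $F((1-t)\,z+t\,w)\le 0$, hence equality. If, say, $F(w)=0<F(z)$, I would regularize by introducing an auxiliary parameter $s>0$ and writing
\[
(1-t)\,z+t\,w=\alpha\,\widetilde{z}+s\cdot\frac{t\,w}{s},\qquad F\!\left(\frac{t\,w}{s}\right)=\frac{t}{s}\,F(w)=0,
\]
so that the previous normalization argument (now with unit values replaced by $\max\{1,0\}=1$) yields $F((1-t)\,z+t\,w)\le\alpha+s$; letting $s\to 0^+$ recovers the desired bound. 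The symmetric case is identical.

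I do not expect any serious obstacle: the whole proof is the well-known observation that positive homogeneity promotes quasiconvexity to convexity, and the technical care only concerns the zero level set. No auxiliary results beyond the definitions of $1$-homogeneity and of property \eqref{levelconvex} are needed.
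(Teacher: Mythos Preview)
Your argument is correct and follows essentially the same idea as the paper: normalize to the level set $\{F=1\}$ and apply \eqref{levelconvex} there, with an $\varepsilon$-type regularization to handle the degenerate case $F=0$. The only cosmetic difference is that the paper first deduces subadditivity $F(x_0+x_1)\le F(x_0)+F(x_1)$ and then obtains convexity from subadditivity and $1$-homogeneity, whereas you go directly to the convexity inequality.
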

\begin{proof}
Let $x_0,x_1\in\mathbb{R}^N$, if $F(x_0)=F(x_1)=0$ then by \eqref{levelconvex}
\[
F((1-t)\,x_0+t\,x_1)=0,\qquad t\in[0,1].
\] 
Let us now suppose for example that $F(x_0)>0$
and take $\varepsilon>0$, we define
\[
z=\frac{x_0}{F(x_0)},\quad w=\frac{x_1}{F(x_1)+\varepsilon}\quad \mbox{ and }\quad t=\frac{F(x_1)+\varepsilon}{F(x_0)+F(x_1)+\varepsilon}.
\]
By using the $1-$homogeneity of $F$, we then obtain
\[
\begin{split}
F((1-t)\, z+t\, w)&=\frac{F(x_0+x_1)}{F(x_0)+F(x_1)+\varepsilon},
\end{split}
\]
while
\[
\max\{F(z),F(w)\}=\max\left\{1,\frac{F(x_1)}{F(x_1)+\varepsilon}\right\}=1.
\]
Then \eqref{levelconvex} implies
\[
F(x_0+x_1)\le F(x_0)+F(x_1)+\varepsilon,\qquad x_0,x_1\in\mathbb{R}^N,
\]
and since $\varepsilon>0$ is arbitrary, we get
\[
F(x_0+x_1)\le F(x_0)+F(x_1),\qquad x_0,x_1\in\mathbb{R}^N,
\]
i.e. $F$ is subadditive.
This in turn implies the desired result, as
\[
F((1-t)\, x_0+t\, x_1)\le F((1-t)\, x_0)+F(t\, x_1)=(1-t)\, F(x_0)+t\, F(x_1),
\]
which concludes the proof.
\end{proof}
\begin{lm}
\label{lm:analisi0}
Let $1<p<\infty$ and let $H:\mathbb{R}^N\to[0,+\infty)$ be a $C^1$ positively $p-$homogeneous convex function. If $H(z)=0$ then we have $\nabla H(z)=0$ as well.
\end{lm}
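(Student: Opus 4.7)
The plan is to combine Euler's homogeneity identity with the gradient form of the convexity inequality. First, from positive $p$-homogeneity $H(\lambda z)=\lambda^p H(z)$ for $\lambda\ge 0$, differentiating in $\lambda$ at $\lambda=1$ yields Euler's relation
\[
\langle \nabla H(z),z\rangle = p\, H(z).
\]
Under the assumption $H(z)=0$, this gives $\langle \nabla H(z),z\rangle=0$.

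Next, since $H$ is convex and $C^1$, the standard supporting-hyperplane inequality reads
\[
H(w)\ge H(z)+\langle \nabla H(z),w-z\rangle,\qquad w\in\mathbb{R}^N.
\]
Combining with $H(z)=0$ and $\langle \nabla H(z),z\rangle=0$ we obtain
\[
H(w)\ge \langle \nabla H(z),w\rangle \quad \text{for every } w\in\mathbb{R}^N.
\]

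Finally I would test this inequality on $w=\lambda\,\nabla H(z)$ with $\lambda>0$, which is the direction along which the right-hand side is maximal and positive. Homogeneity gives $H(\lambda\,\nabla H(z))=\lambda^p H(\nabla H(z))$, hence
\[
\lambda^{p-1}\, H(\nabla H(z))\ge |\nabla H(z)|^2,\qquad \lambda>0.
\]
Letting $\lambda\to 0^+$ and using $p>1$ forces $|\nabla H(z)|^2\le 0$, so $\nabla H(z)=0$.

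I don't anticipate a real obstacle: the argument is only a couple of lines once Euler's identity and the first-order convexity inequality are written down. The only point that requires a moment of care is that positive homogeneity is asserted only for non-negative scalars, which is why one must test the convexity inequality in the direction $w=\lambda\,\nabla H(z)$ with $\lambda>0$ (not with negative $\lambda$). The condition $p>1$ is used precisely here, to make $\lambda^{p-1}\to 0$ as $\lambda\to 0^+$.
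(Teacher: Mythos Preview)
Your proof is correct, but it takes a more elaborate route than the paper's. The paper's argument uses neither convexity nor homogeneity: since $H\ge 0$ and $H(z)=0$, the point $z$ is a global minimizer of the $C^1$ function $H$, and the first-order necessary condition $\nabla H(z)=0$ follows immediately. The paper spells this out by picking a unit vector $h$ with $\langle \nabla H(z),h\rangle=|\nabla H(z)|$ and noting that $g(t)=H(z+t\,h)$ satisfies $g\ge 0$, $g(0)=0$, $g'(0)=|\nabla H(z)|$, which forces $g'(0)=0$. Your approach --- Euler's identity, then the supporting-hyperplane inequality, then a scaling limit in the direction $\nabla H(z)$ --- is perfectly valid, but it invokes more hypotheses than are actually needed: the convexity, the $p$-homogeneity, and the condition $p>1$ are all superfluous here, since the conclusion holds for any nonnegative $C^1$ function at a point where it vanishes.
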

\begin{proof}
The statement is evident if $z=0$, thus let us suppose that $z\not=0$. Assume on the contrary that $\nabla H(z)\not= 0$, then there exists $h\in\mathbb{R}^N$ with unit norm such that $\langle\nabla H(z),h\rangle=|\nabla H(z)|$. The function $g(t)=H(z+t\,h)$ has the following properties
\[
g\in C^1(\mathbb{R}),\qquad g(t)\ge 0,\qquad g'(0)=|\nabla H(z)|>0=g(0).
\]
This gives a contradiction, thus $\nabla H(z)=0$.
\end{proof}

\subsection{Convexity of generalized kinetic energies}

The first convexity principle we consider is the following.
\begin{prop}
\label{lm:convessa}
Let $1<p<\infty$ and let $H:\mathbb{R}^N\to[0+\infty)$ be a convex positively $p-$homogeneous function. For every $0<\beta\le p-1$ the function 
\[
(m,\phi)\mapsto \frac{H(\phi)}{m^{\beta}},\qquad (m,\phi)\in(0,+\infty)\times\mathbb{R}^N,
\]
is convex. 
\end{prop}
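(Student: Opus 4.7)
The function $f(m,\phi) = H(\phi)/m^\beta$ is jointly positively $(p-\beta)$-homogeneous on the convex cone $(0,\infty)\times\mathbb{R}^N$. Rather than appeal to Lemma~\ref{lm:convexity} (which would reduce to checking that the sublevel sets of $f^{1/(p-\beta)}$ are convex, but requires a slightly awkward extension to a half-space), I would verify the two-point convexity inequality directly, combining convexity of $H$ in the $\phi$-variable with concavity of a suitable power of $m$. So fix $(m_0,\phi_0),(m_1,\phi_1)\in(0,\infty)\times\mathbb{R}^N$ and $t\in(0,1)$, and set $m_t=(1-t)m_0+tm_1$, $\phi_t=(1-t)\phi_0+t\phi_1$. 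The goal is to prove
\[
\frac{H(\phi_t)}{m_t^{\beta}}\le (1-t)\,\frac{H(\phi_0)}{m_0^{\beta}}+t\,\frac{H(\phi_1)}{m_1^{\beta}}.
\]

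\textbf{Main step.} Introduce auxiliary weights $\theta_0,\theta_1>0$ with $\theta_0+\theta_1=1$ and rewrite
\[
\phi_t=\theta_0\cdot\frac{(1-t)\,\phi_0}{\theta_0}+\theta_1\cdot\frac{t\,\phi_1}{\theta_1}.
\]
Applying convexity of $H$ followed by its $p$-homogeneity gives
\[
H(\phi_t)\le\frac{(1-t)^p}{\theta_0^{p-1}}\,H(\phi_0)+\frac{t^p}{\theta_1^{p-1}}\,H(\phi_1).
\]
Now set $\alpha=\beta/(p-1)$, which lies in $(0,1]$ by the assumption $\beta\le p-1$, and choose the weights
\[
\theta_0=\frac{(1-t)\,m_0^{\alpha}}{S},\qquad \theta_1=\frac{t\,m_1^{\alpha}}{S},\qquad S:=(1-t)\,m_0^{\alpha}+t\,m_1^{\alpha}.
\]
A short computation using $\alpha(p-1)=\beta$ collapses the previous bound to
\[
H(\phi_t)\le S^{p-1}\left[(1-t)\,\frac{H(\phi_0)}{m_0^{\beta}}+t\,\frac{H(\phi_1)}{m_1^{\beta}}\right].
\]

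\textbf{Closing and main obstacle.} It remains to divide by $m_t^{\beta}$ and verify that $S^{p-1}\le m_t^{\beta}=(m_t^{\alpha})^{p-1}$, i.e.\ $S\le m_t^{\alpha}$. This is exactly the concavity inequality
\[
(1-t)\,m_0^{\alpha}+t\,m_1^{\alpha}\le\bigl((1-t)\,m_0+t\,m_1\bigr)^{\alpha},
\]
which holds precisely when $\alpha\in(0,1]$, that is, when $\beta\le p-1$. This is the critical step where the hypothesis on $\beta$ enters, and it is also where the result becomes sharp: any larger $\beta$ would reverse the inequality. The edge cases $H(\phi_0)=0$ or $H(\phi_1)=0$ cause no issue, since the intermediate inequalities only use convexity and nonnegativity of $H$; the case $\beta=0$ is trivial as $f$ then reduces to $H(\phi)$. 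The main difficulty of the argument is really just guessing the correct weights $\theta_0,\theta_1$: once the exponent $\alpha=\beta/(p-1)$ is identified as the right scaling, everything else is mechanical.
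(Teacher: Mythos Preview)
Your proof is correct and takes a genuinely different route from the paper. The paper first treats the borderline case $\beta=p-1$ by writing $H(\phi)/m^{p-1}$ as a supremum of affine functions via the Legendre--Fenchel transform (the ``perspective function'' viewpoint), and then reduces $0<\beta<p-1$ to that case by composing with the concave substitution $\psi(m)=m^{\beta/(p-1)}$, using that $\Phi$ is decreasing in its first argument. You instead give a single direct estimate: the weighted decomposition $\phi_t=\theta_0\cdot\frac{(1-t)\phi_0}{\theta_0}+\theta_1\cdot\frac{t\phi_1}{\theta_1}$ with $\theta_i$ proportional to $m_i^{\alpha}$, $\alpha=\beta/(p-1)$, lets you combine convexity and $p$-homogeneity of $H$ with concavity of $m\mapsto m^{\alpha}$ in one stroke. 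Both arguments hinge on the same exponent $\alpha=\beta/(p-1)$, but yours is more elementary (no duality) and handles all $\beta\in(0,p-1]$ uniformly, while the paper's version is more conceptual, making transparent the link with perspective functions and why $\beta=p-1$ is the natural extremal case.
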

\begin{proof}
For $\beta=p-1$, it is sufficient to observe that
\begin{equation}
\label{fenchel}
\frac{H(\phi)}{m^{p-1}}=\sup_{(t,\xi)} \left\{t\,m+\langle\xi,\phi\rangle\, :\, t+H^*(\xi)\le 0\right\},\qquad m>0,\ \phi\in\mathbb{R}^N,
\end{equation}
where $H^*$ denotes the Legendre-Fenchel transform of $H$. This would give the desired result, since the supremum of affine functions is a convex function. 
\par
For completeness, we verify formula \eqref{fenchel}: since for every $m>0$ the map $t\mapsto t\,m$ is increasing, the maximization in \eqref{fenchel} is unchanged if we replace the inequality constraint by the condition $t+H^*(\xi)=0$. Then the right-hand side of \eqref{fenchel} is equivalent to
\[
\sup_{\xi\in\mathbb{R}^N} \langle\xi,\phi\rangle-H^*(\xi)\, m=m\, \left[\sup_{\xi\in\mathbb{R}^N}\, \left\langle\xi,\frac{\phi}{m}\right\rangle-H^*(\xi)\right]=m\, H^{**}\left(\frac{\phi}{m}\right),
\]
which gives the desired conclusion, by using that $H^{**}=H$ and the positive homogeneity of $H$.
\vskip.2cm\noindent
For $0<\beta<p-1$, let us set for simplicity
\[
\Phi(m,\phi)=\frac{H(\phi)}{m^{p-1}}\qquad \mbox{ and }\qquad \psi(m)=m^\vartheta,\qquad m>0,\ \phi\in\mathbb{R}^N,
\]
where $\vartheta=\beta/(p-1)<1$, then we can rewrite 
\[
\frac{H(\phi)}{m^\beta}=\Phi(\psi(m),\phi),
\]
where $\Phi$ is jointly convex thanks to the first part of the proof and decreasing in its first argument, while $\psi$ is concave, then it is standard to see that their composition is a convex function. Indeed, for every $t\in[0,1]$, $m_0,m_1>0$ and $\phi_0,\phi_1\in\mathbb{R}^N$, we get
\[
\begin{split}
\Phi\Big(\psi((1-t)\, m_0+t\, m_1),(1-t)\, \phi_0+t\, \phi_1\Big)&\le \Phi\Big((1-t)\, \psi(m_0)+t\, \psi(m_1),(1-t)\, \phi_0+t\, \phi_1\Big)\\
&\le (1-t)\, \Phi(m_0,\phi_0)+t\, \Phi(m_1,\phi_1),\\
\end{split}
\]
which gives the desired result.
\end{proof}
A couple of comments on the previous result are in order.
\begin{oss}
As already recalled in the Introduction, the main instance of functions considered in Proposition \ref{lm:convessa} is the following one
\begin{equation}
\label{brenier}
(m,\phi)\,\mapsto \frac{|\phi|^2}{m},\qquad \phi\in\mathbb{R}^N, m>0.
\end{equation}
If one regards the scalar quantity $m$ as a {\it mass} and the vector quantity $\phi$ as the {\it moment} of this mass, i.e. if we decompose $\phi$ as $\phi=v\, m$ with $v\in\mathbb{R}^N$ (the velocity of the mass particle), then we would have
\[
\frac{|\phi|^2}{m}=|v|^2\, m.
\]
This simple remark is the crucial ingredient of the so-called {\it Benamou-Brenier formula} for the $2-$Wasserstein distance (see \cite{BB,Br}). The latter is a distance on the space of probability measures $\mathcal{P}(\Omega)$ over $\Omega$, defined by
\[
w_2(\rho_0,\rho_1)^2:=\inf_T\left\{\int_{\Omega\times\Omega} |x-T(x)|^2\, d\rho_0\, :\, T_\#\rho_0=\rho_1\right\},
\]
for every $\rho_0,\rho_1\in\mathcal{P}(\Omega)$. Here $T_\# \rho_0$ denotes the push-forward of the measure $\rho_0$. The Benamou-Brenier formula asserts that we have
\[
w_2(\rho_0,\rho_1)^2=\inf \left\{\int_0^1 \int_\Omega |v|^2\, \mu_t\,dx\,dt\, :\, \begin{array}{c}\partial_t \mu_t+\mathrm{div} (v_t\, \mu_t)=0 \\
\mu_0=\rho_0\ \mbox{ and }\ \mu_1=\rho_1
\end{array}\right\}.
\]
The latter consists in minimizing the integral of the total kinetic energy (the {\it action}), under a conservation of mass constraint. 
Thanks to the previous discussion, this dynamical problem can be transformed in a convex variational problem under linear constraint, once we introduce the variable
\[
\phi_t=v_t\, \mu_t.
\] 
For generalizations of this transport problem involving functions of the form $H(\phi)\, m^{-\beta}$ the reader can consult \cite{CCN} and \cite{DNS}.
\end{oss}

\begin{oss}[Sharpness of the condition on $\beta$]
The previous convexity property fails to be true in general for $\beta>p-1$. Let us fix $\phi_0\in\mathbb{R}^N\setminus\{0\}$ and $m_0>0$. We take $\phi_1=c\, \phi_0$ and $m_1=c\, m_0$ with $c>1$, then we consider the convex combination
\[
(m_t,\phi_t)=((1-t)\, m_0+t\, m_1, (1-t)\, \phi_0+t\, \phi_1).
\]
For every $p-1<\beta<p$ by strict concavity of the function $\tau\mapsto \tau^{p-\beta}$ we have
\[
\begin{split}
\frac{|\phi_t|^p}{m_t^\beta}=(1-t+t\,c)^{p-\beta}\, \frac{|\phi_0|^p}{m_0^\beta}&>(1-t)\,\frac{|\phi_0|^p}{m^\beta_0}+t\,c^{p-\beta}\, \frac{|\phi_0|^p}{m_0^\beta}\\
&=(1-t)\,\frac{|\phi_0|^p}{m_0^\beta}+\frac{|\phi_1|^p}{m_1^\beta}.
\end{split}
\] 
\end{oss}

\subsection{Hidden convexity}

The next convexity principle has been probably first identified by Benguria in his Ph.D. dissertation in the case $p=q=2$ and $H(z)=|z|^2$, see \cite{BBL}. See also \cite{BK} and \cite{TTU} for some generalizations in the case $q=p$. 
\begin{prop}[General hidden convexity]
\label{lm:hidden}
Let $1<p<\infty$ and $1<q\le p$. Let $H:\mathbb{R}^N\to[0+\infty)$ be a positively $p-$homogeneous convex function.  
For every pair of differentiable functions $u_0,u_1\ge 0$, we define
\[
\sigma_t(x)=\Big[(1-t)\, u_0(x)^q+t\, u_1(x)^q\Big]^\frac{1}{q}\qquad t\in[0,1],\,x\in\Omega.
\] 
Then there holds
\begin{equation}
\label{BK}
H(\nabla \sigma_t)\le (1-t)\,H(\nabla u_0)+t\, H(\nabla u_1),\quad t\in[0,1].
\end{equation}
\end{prop}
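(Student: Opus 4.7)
The plan is to reduce the inequality to the joint convexity of the generalized kinetic energy established in Proposition \ref{lm:convessa}, by a careful choice of the parameter $\beta$ and of the variables $(m,\phi)$.

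Concretely, I would set
\[
\beta=\frac{p\,(q-1)}{q}.
\]
Since $1<q\le p$, an immediate check gives $0<\beta\le p-1$, so Proposition \ref{lm:convessa} yields that
\[
\Phi(m,\phi):=\frac{H(\phi)}{m^{\beta}},\qquad (m,\phi)\in(0,+\infty)\times\mathbb{R}^N,
\]
is jointly convex. Next, differentiating the defining identity $\sigma_t^{\,q}=(1-t)\,u_0^{\,q}+t\,u_1^{\,q}$ one gets
\[
\sigma_t^{\,q-1}\,\nabla\sigma_t=(1-t)\,u_0^{\,q-1}\,\nabla u_0+t\,u_1^{\,q-1}\,\nabla u_1,
\]
so the pairs $(m_i,\phi_i):=(u_i^{\,q},\,u_i^{\,q-1}\nabla u_i)$ for $i=0,1$ are in convex combination with $(m_t,\phi_t):=(\sigma_t^{\,q},\,\sigma_t^{\,q-1}\nabla\sigma_t)$.

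The key algebraic observation is then that, thanks to the $p$-homogeneity of $H$ and the choice of $\beta$,
\[
\Phi\bigl(u_i^{\,q},\,u_i^{\,q-1}\nabla u_i\bigr)=\frac{u_i^{\,p(q-1)}\,H(\nabla u_i)}{u_i^{\,q\beta}}=H(\nabla u_i),
\]
and analogously $\Phi(m_t,\phi_t)=H(\nabla\sigma_t)$ at every point where $\sigma_t>0$. Applying the joint convexity of $\Phi$ to $(m_t,\phi_t)=(1-t)(m_0,\phi_0)+t\,(m_1,\phi_1)$ gives the desired inequality \eqref{BK} at all such points.

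The only real subtlety is what happens at points where $\sigma_t$ (or one of the $u_i$) vanishes, since $\Phi$ blows up as $m\to 0^+$. This is, however, harmless: at a point $x$ where $u_i(x)=0$, the nonnegativity and differentiability of $u_i$ force $\nabla u_i(x)=0$, hence $H(\nabla u_i(x))=0$ and $\phi_i(x)=0$; if both $u_0(x)=u_1(x)=0$, then $\sigma_t(x)=0$ with $\nabla\sigma_t(x)=0$ and both sides of \eqref{BK} vanish. In the mixed case (e.g. $u_0(x)=0<u_1(x)$) one recovers \eqref{BK} either by a direct pointwise computation, using $p$-homogeneity and $t^{p/q}\le t$ for $q\le p$, or, most cleanly, by applying the argument above to $u_0+\varepsilon,\,u_1+\varepsilon$ (which stay strictly positive), obtaining the inequality for the corresponding $\sigma_t^{\varepsilon}$, and then letting $\varepsilon\downarrow 0$. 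I expect the bookkeeping of this degenerate case to be the only delicate point; the main inequality itself is a one-line consequence of Proposition \ref{lm:convessa}.
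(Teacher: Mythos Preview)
Your argument is correct, and the handling of the degenerate set $\{\sigma_t=0\}$ is fine (either the direct computation using $t^{p/q}\le t$, or the $\varepsilon$--shift both work). However, the route you take is \emph{not} the one the paper uses to prove Proposition \ref{lm:hidden}. The paper's proof reduces to the already--known case $q=p$ (quoted from \cite{BF}) by applying Lemma \ref{lm:convexity} to $F=H^{q/p}$, which is positively $q$--homogeneous and level--convex, hence convex; then the $q$--homogeneous hidden convexity for $F$ is raised to the power $p/q$ via the convexity of $\tau\mapsto\tau^{p/q}$.

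Your approach instead plugs the choice $\beta=p(q-1)/q$ into Proposition \ref{lm:convessa} and reads off \eqref{BK} from the joint convexity of $(m,\phi)\mapsto H(\phi)/m^\beta$ with $m=u^q$, $\phi=u^{q-1}\nabla u$. This is precisely the implication ``kinetic energy $\Rightarrow$ hidden convexity'' that the paper spells out later, in Section \ref{sec:3}, as part of the discussion of equivalences (there the change of variables is written as $U=u^q$, $\gamma_t=(1-t)U+tV$). So you have essentially anticipated that section. The gain of your route is that it is self--contained and does not rely on the external reference for the $p=q$ case; the gain of the paper's route is that it keeps Proposition \ref{lm:hidden} independent of Proposition \ref{lm:convessa}, so that the equivalence statement in Section \ref{sec:3} is not circular.
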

\begin{proof}
The proof for the case $p=q$ can be found for example in \cite{BF}. In order to consider the case $q<p$, we observe that by Lemma \ref{lm:convexity}, the function $F=H^{q/p}$ is a positively $q-$homogeneous convex function. Then the first part of the proof implies
\[
F(\nabla \sigma_t)\le (1-t)\, F(\nabla u_0)+t\, F(\nabla u_1),
\]
and by raising to the power $p/q$ and using the convexity of $\tau\mapsto \tau^{p/q}$, we end up with \eqref{BK}.
\end{proof}
\begin{oss}
We remark that neither strict convexity of $H$ nor strict positivity of the functions is needed in the previous result, unless one is interested in identification of equality cases in \eqref{BK}. Moreover, $H$ is only required to be only {\it positively} homogeneous, i.e. it is not necessarily even.
\end{oss}
\begin{oss}[Sharpness of the condition on $q$]
\label{oss:vital}
Again, the condition $q\le p$ is vital. Indeed, by taking a non-constant $u_0\not =0$ and $u_1=c\, u_0$ for $c>1$, then we have 
\[
\sigma_t(x)=((1-t)\, u_0^q+t\, u_1^q)^\frac{1}{q}=((1-t)+t\, c^q)^\frac{1}{q}\, u_0,
\]
and
\[
H(\nabla \sigma_t)=((1-t)+t\, c^q)^\frac{p}{q}\, H(\nabla u_0)>(1-t)\, H(\nabla u_0)+t\, H(\nabla u_1),
\]
by strict concavity of $\tau\mapsto \tau^{p/q}$.
\end{oss}

\subsection{Picone inequalities}
We now prove a general version of the so-called {\it Picone inequality}. The usual one, i.e.
\[
\left\langle|\nabla u|^{p-2}\, \nabla u, \nabla \left(\frac{v^p}{u^{p-1}}\right)\right\rangle\le |\nabla v|^p,\qquad v\ge 0,\, u>0,
\]
proved by Allegretto and Huang, see \cite[Theorem 1.1]{AH} corresponds to taking $p=q$ and $H(z)=|z|^p$ in \eqref{pitone} below.

\begin{prop}[General Picone inequality]
\label{lm:pitone}
Let $1<q\le p$ and let $H:\mathbb{R}^N\to[0,+\infty)$ be a $C^1$ positively $p-$homogeneous convex function. For every pair of positive differentiable functions $u,v$ with $u>0$, we have
\begin{equation}
\label{pitone}
\begin{split}
\frac{1}{p}\,\left\langle\nabla H(\nabla u), \nabla \left(\frac{v^q}{u^{q-1}}\right)\right\rangle\, &\le H(\nabla v)^\frac{q}{p}\, H(\nabla u)^\frac{p-q}{p}.
\end{split}
\end{equation}
\end{prop}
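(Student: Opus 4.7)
The strategy is to expand the gradient on the left-hand side explicitly, exploit Euler's identity together with a convexity-plus-scaling bound on $\langle \nabla H,\cdot\rangle$, and then close the estimate by Young's inequality.

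First, by the chain rule,
\[
\nabla\left(\frac{v^q}{u^{q-1}}\right) = q\,\frac{v^{q-1}}{u^{q-1}}\,\nabla v - (q-1)\,\frac{v^q}{u^q}\,\nabla u,
\]
and positive $p$-homogeneity of $H$ gives Euler's identity $\langle \nabla H(\nabla u),\nabla u\rangle = p\, H(\nabla u)$. Substituting into the left-hand side of \eqref{pitone} transforms it into
\[
\frac{q}{p}\,\frac{v^{q-1}}{u^{q-1}}\,\langle \nabla H(\nabla u),\nabla v\rangle - (q-1)\,\frac{v^q}{u^q}\, H(\nabla u).
\]

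The key auxiliary step is the scalar bound
\[
\langle \nabla H(z), w\rangle \le p\, H(z)^{(p-1)/p}\, H(w)^{1/p},\qquad z,w\in\mathbb{R}^N,
\]
which I would derive from the convexity inequality $H(\lambda w) - H(z) \ge \langle \nabla H(z),\lambda w - z\rangle$ for $\lambda>0$: combining it with Euler and $p$-homogeneity yields $\langle \nabla H(z),w\rangle \le \lambda^{p-1}\, H(w) + (p-1)\,\lambda^{-1}\, H(z)$, and minimizing over $\lambda$ (optimal choice $\lambda = (H(z)/H(w))^{1/p}$ when $H(w)>0$) produces the claim. The degenerate case $H(z)=0$ is handled by Lemma \ref{lm:analisi0}, which forces $\nabla H(z)=0$ so both sides of \eqref{pitone} vanish.

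Applying this bound to $\langle \nabla H(\nabla u),\nabla v\rangle$ and introducing the auxiliary quantities $X := (v/u)\, H(\nabla u)^{1/p}$ and $Y := H(\nabla v)^{1/p}$, I can factor the common $H(\nabla u)^{(p-q)/p}$ out of both terms to get
\[
\frac{1}{p}\,\langle\nabla H(\nabla u),\nabla(v^q/u^{q-1})\rangle \le H(\nabla u)^{(p-q)/p}\,\bigl(q\, X^{q-1}\, Y - (q-1)\, X^q\bigr).
\]
The proposition then reduces to the pointwise inequality $q\, X^{q-1}\, Y \le (q-1)\, X^q + Y^q$ for $X,Y\ge 0$, which is precisely Young's inequality with conjugate exponents $q$ and $q/(q-1)$ (meaningful since $q>1$). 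The main obstacle, beyond careful exponent bookkeeping, is identifying the right substitution $X,Y$: one must choose them so that the trade-off $X^q$ vs. $Y^q$ appears cleanly and the residual factor $H(\nabla u)^{(p-q)/p}$ coincides exactly with the one appearing on the right-hand side of \eqref{pitone}.
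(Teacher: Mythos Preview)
Your proof is correct and follows a genuinely different route from the paper's. The paper proceeds in two stages: for $q=p$ it applies the convexity inequality $H(z)\ge H(w)+\langle\nabla H(w),z-w\rangle$ directly with the single clever substitution $z=\nabla v$, $w=(v/u)\,\nabla u$, which after Euler's identity collapses immediately to \eqref{pitone}; for $q<p$ it reduces to the previous case by passing to the $q$-homogeneous function $F=H^{q/p}$ (whose convexity rests on Lemma~\ref{lm:convexity}) and then multiplies by $H(\nabla u)^{(p-q)/p}$. Your argument instead expands the gradient, isolates $\langle\nabla H(\nabla u),\nabla v\rangle$, proves the scalar bound $\langle\nabla H(z),w\rangle\le p\,H(z)^{(p-1)/p}H(w)^{1/p}$ by convexity plus an optimization in $\lambda$, and closes with Young's inequality after the substitution $X=(v/u)H(\nabla u)^{1/p}$, $Y=H(\nabla v)^{1/p}$. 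What you gain is uniformity: your proof treats all $1<q\le p$ at once and never invokes the convexity of $H^{q/p}$. What the paper gains is economy in the base case $q=p$, where its single substitution avoids your two-step inequality chain. Both approaches ultimately rest on the same convexity of $H$, just packaged differently.
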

\begin{proof}
Let us start with the case $p=q$. We use the convexity inequality
\[
H(z)\ge H(w)+\langle \nabla H(w),z-w\rangle,
\]
with the choices
\[
z=\nabla v\qquad \mbox{ and }\qquad w=\nabla u\, \left(\frac{v}{u}\right).
\]
By using the $p-$homogeneity of $H$ we then get
\[
\begin{split}
H(\nabla v)&\ge \left(\frac{v}{u}\right)^p\, H(\nabla u)+\langle \nabla H(\nabla u),\nabla v\rangle\,\left(\frac{v}{u}\right)^{p-1}-\langle \nabla H(\nabla u),\nabla u\rangle\,\left(\frac{v}{u}\right)^p\\
&=\langle \nabla H(\nabla u),\nabla v\rangle\,\left(\frac{v}{u}\right)^{p-1}-\left(1-\frac{1}{p}\right)\, \langle \nabla H(\nabla u),\nabla u\rangle\,\left(\frac{v}{u}\right)^p\\
&=\frac{1}{p}\, \left\langle \nabla H(\nabla u),\nabla \left(\frac{v^p}{u^{p-1}}\right)\right\rangle, 
\end{split}
\]
which concludes the proof of \eqref{pitone} for $q=p$. 
\vskip.2cm\noindent
We now take $1<q<p$ and set
\[
F(z)=H(z)^\frac{q}{p},\qquad z\in\mathbb{R}^N,
\]
which is convex and positively $q-$homogeneous\footnote{Also observe that $F\in C^1(\mathbb{R}^N)$. It is sufficient to check that $F$ is differentiable at the origin and that its differential vanishes at $z=0$. Indeed, by using homogeneity we have
\[
F(h)-F(0)=H(h)^\frac{q}{p}=|h|^q\, H\left(\frac{h}{|h|}\right)^\frac{q}{p}=o(|h|),\qquad h\in\mathbb{R}^N\setminus\{0\} \mbox{ such that } |h|\ll 1.
\]}, then the first part of the proof implies
\[
\frac{1}{q}\,\left\langle \nabla F(\nabla u),\nabla\left(\frac{v^q}{u^{q-1}}\right)\right\rangle\le F(\nabla v).
\]
Observe that if $H(\nabla u)=0$, then we have $\nabla H(\nabla u)=0$ as well (see Lemma \ref{lm:analisi0}) and \eqref{pitone} holds true. Thus we can assume $H(\nabla u)\not =0$. The previous inequality is equivalent to
\[
\frac{1}{p}\, H(\nabla u)^\frac{q-p}{p}\, \left\langle \nabla H(\nabla u),\nabla\left(\frac{v^q}{u^{q-1}}\right)\right\rangle\le H(\nabla v)^\frac{q}{p}.
\]
If we multiply the previous by $H(\nabla)^{(p-p)/p}$ we eventually attains the conclusion.
\end{proof}
\begin{oss}[Equivalent form of the Picone inequality]
\label{oss:equivalent}
For $p\not =q$, as a plain consequence of Young inequality, \eqref{pitone} implies
\begin{equation}
\label{weak_pitone}
\frac{1}{p}\,\left\langle\nabla H(\nabla u), \nabla \left(\frac{v^q}{u^{q-1}}\right)\right\rangle\, \le \frac{q}{p}\,H(\nabla v)+\frac{p-q}{p}\, H(\nabla u).
\end{equation}
Observe that for $q=1$, the previous inequality reduces to
\[
H(\nabla u)+\langle \nabla H(\nabla u),\nabla v-\nabla u\rangle\le 
H(\nabla v),
\]
which just follows from the convexity of $z\mapsto H(z)$. On the other hand, by applying \eqref{weak_pitone} with the choices (here $\varepsilon>0$)
\[
U=\big(\varepsilon+H(\nabla u)\big)^{-\frac{1}{p}}\, u\qquad \mbox{ and }\qquad V=\big(\varepsilon+H(\nabla v)\big)^{-\frac{1}{p}}\, v,
\]
we get
\[
\frac{1}{p}\, \big(\varepsilon+H(\nabla u)\big)^{\frac{q-p}{p}}\, \big(\varepsilon+H(\nabla v)\big)^{-\frac{q}{p}}\, \left\langle\nabla H(\nabla u),\nabla\left(\frac{v^q}{u^{q-1}}\right) \right\rangle\le 1.
\]
By multiplying the previous by $(\varepsilon+H(\nabla u))^{(p-q)/p}\, (\varepsilon+H(\nabla v))^{\frac{q}{p}}$ and then letting $\varepsilon$ goes to $0$, we get \eqref{pitone}.
\end{oss}
\begin{oss}[Non-homogeneous functions]
All the convexity principles considered in this section have been proven under the assumption that $H$ is positively $p-$homogeneous. Nevertheless, the results are still true for some $H$ violating this condition. This is the case for example of the anisotopic function
\[
H(z)=\sum_{i=1}^N H_i(z_i),\qquad \mbox{ where }\quad H_i(t)=|t|^{p_i},
\]
and $1<p_1\le \dots\le p_N$. Namely, by applying \eqref{pitone} to each $H_i$ and then summing up, we get
\[
\sum_{i=1}^N |u_{x_i}|^{p_i-2}\, u_{x_i}\, \left(\frac{v^{q_i}}{u^{q_i-1}}\right)_{x_i}\le \sum_{i=1}^N |v_{x_i}|^\frac{q_i}{p_i}\, |u_{x_i}|^\frac{p_i-q_i}{p_i},
\]
for every $q_1,\dots,q_N$ such that $1<q_i\le p_i$. In the very same way from \eqref{BK} we get 
\[
\sum_{i=1}^N \big|(\sigma_t)_{x_i}\big|^{p_i}\le (1-t)\, \sum_{i=1}^N |u_{x_i}|^{p_i}+t\,\sum_{i=1}^N |v_{x_i}|^{p_i},\qquad t\in[0,1],
\]
where $\sigma_t=((1-t)\, u^q+t\, v^q)^{1/q}$ and $1<q\le p_1$. From Proposition \ref{lm:convessa} we can infer the convexity of the function
\[
(m,\phi)\mapsto \sum_{i=1}^N \frac{|\phi|^{p_i}}{m^{\beta_i}},\qquad (m,\phi)\in(0,+\infty)\times\mathbb{R}^N,
\]
for every $\beta_1,\dots,\beta_N$ such that $0<\beta_i\le p_i-1$.
\end{oss}

\section{Equivalences}
\label{sec:3}

In this section we will show that the three convexity principles proved in the previous section are indeed equivalent. In other words, they are just three different ways to look at the same principle. 

\subsection{Kinetic energies and Hidden convexity}

Let $u$ be a differentiable function on an open set $\Omega\subset\mathbb{R}^N$, which is everywhere positive. We first observe that if in the generalized kinetic energy of Proposition \ref{lm:convessa} we make the choice
\[
m=u\qquad \mbox{ and }\qquad \phi=\nabla u,
\]
then we obtain the functional
\[
u\mapsto \frac{H(\nabla u)}{u^\beta}.
\]
which is convex in the usual sense, provided that $0<\beta\le p-1$, thanks to Proposition \ref{lm:convessa}. This convexity is indeed equivalent to \eqref{BK}, as we now show.
\par
Indeed, let us pick two differentiable functions $u,v$ which are everywhere positive. We observe that by setting $U=u^q$, $V=v^q$ and $\gamma_t=(1-t)\, U+t\, V$, we get by Proposition \ref{lm:convessa}
\[
\frac{H(\nabla \gamma_t)}{\gamma_t^{\beta}}\le (1-t)\, \frac{H(\nabla U)}{U^\beta}+t\, \frac{H(\nabla V)}{V^\beta}.
\]
By using the homogeneity of $H$, the previous is equivalent to
\[
H\left(\nabla \gamma^\frac{p-\beta}{p}\right)\le (1-t)\, H\left(\nabla U^\frac{p-\beta}{p}\right)+t\, H\left(\nabla V^\frac{p-\beta}{p}\right).
\]
If we now choose $\beta$ in such a way that\footnote{Observe that such a choice is feasible, since 
\[
p\, \left(1-\frac{1}{q}\right)\le p-1 \quad \Leftrightarrow \quad \frac{p}{p-1}\le \frac{q}{q-1}\quad \Leftrightarrow \quad p\ge q.
\]}
\[
\frac{p-\beta}{p}=\frac{1}{q},\qquad \mbox{ i.e.}\quad \beta=p\,\left(1-\frac{1}{q}\right),
\]
we see that the previous inequality becomes
\[
H(\nabla \sigma_t)\le (1-t)\, H(u)+t\, H(v),
\]
where $\sigma_t=\gamma_t^{1/q}=((1-t)\, u^q+t\, v^q)^{1/q}$ as always.

\subsection{Hidden convexity and Picone}
We first show that
\[
(\mbox{Hidden convexity})\qquad \Longrightarrow\qquad (\mbox{Picone}).
\]
As before, given $u,v$ positive differentiable functions with $u>0$, we set
\begin{equation}
\label{curvetta}
\sigma_t(x)=\Big[(1-t)\, u(x)^q+t\, v(x)^q\Big]^\frac{1}{q}\qquad t\in[0,1],\,x\in\Omega.
\end{equation}
Then by using the convexity of $t\mapsto H(\nabla \sigma_t)$, we easily get
\[
\frac{H(\nabla\sigma_t)-H(\nabla u)}{t}\le H(\nabla v)-H(\nabla u).
\]
Observe that again by convexity, the incremental ratio on the left-hand side is monotone, then there exists the limit for $t$ monotonically converging to $0$, i.e. we obtain
\begin{equation}
\label{derivatina}
\left(\frac{d}{dt} H(\nabla \sigma_t)\right)_{|t=0}\le H(\nabla v)-H(\nabla u).
\end{equation}
The previous is indeed equivalent to \eqref{pitone}. To see this, it is sufficient to compute the derivative on the left-hand side. We have
\[
\nabla \sigma_t=\sigma_t^{1-q}\,\left[(1-t)\, \nabla u\, u^{q-1}+t\, \nabla v\, v^{q-1}\right]\qquad \mbox{ and }\qquad \frac{d}{dt} \sigma_t=\frac{1}{q}\,\sigma_{t}^{1-q}\, (v^q-u^q),
\]
so that we can compute
\[
\begin{split}
\frac{d}{dt} \nabla \sigma_t&=(1-q)\, \sigma_t^{-q}\, \left[(1-t)\, \nabla u\, u^{q-1}+t\, \nabla v\, v^{q-1}\right]\,\frac{d}{dt} \sigma_t\\
&+\sigma_t^{1-q}\, \left[\nabla v\, v^{q-1}-\nabla u\, u^{q-1}\right].
\end{split}
\]
Finally, we get
\[
\begin{split}
\left(\frac{d}{dt} \nabla\sigma_t\right)_{|t=0}&=-(q-1)\, \frac{\nabla u}{u}\,\left(\frac{d}{dt} \sigma_t\right)_{t=0}+\nabla v\, \left(\frac{v}{u}\right)^{q-1}-\nabla u\\
&=-\left(\frac{q-1}{q}\right)\, \nabla u\, \left(\left(\frac{v}{u}\right)^q-1\right)+\nabla v\, \left(\frac{v}{u}\right)^{q-1}-\nabla u.
\end{split}
\]
We can now compute the left-hand side of \eqref{derivatina} and obtain
\[
\begin{split}
\left(\frac{d}{dt} H(\nabla \sigma_t)\right)_{|t=0}&=\left\langle \nabla H(\nabla\sigma_t),\frac{d}{dt} \nabla\sigma_t\right\rangle_{|t=0}\\
&=\left\langle\nabla H(\nabla u), \nabla v\right\rangle\, \left(\frac{v}{u}\right)^{q-1}-\frac{p\,(q-1)}{q}\, H(\nabla u)\, \left(\frac{v}{u}\right)^q-\frac{p}{q}\, H(\nabla u).
\end{split}
\]
By inserting this in \eqref{derivatina}, observing that
\[
\frac{1}{q}\,\left\langle\nabla H(\nabla u), \nabla \left(\frac{v^q}{u^{q-1}}\right)\right\rangle=\left\langle \nabla H(\nabla u), \nabla v\right\rangle\, \left(\frac{v}{u}\right)^{q-1}-\frac{p\,(q-1)}{q}\, H(\nabla u)\, \left(\frac{v}{u}\right)^q,
\] 
and multiplying everything by $q/p$, we eventually get
\[
\frac{1}{p}\,\left\langle\nabla H(\nabla u), \nabla \left(\frac{v^q}{u^{q-1}}\right)\right\rangle\le \frac{q}{p}\,H(\nabla v)+\frac{p-q}{p}\, H(\nabla u).
\]
For $p=q$ this is exactly Picone inequality \eqref{pitone}, while for $q<p$ we just have to observe that by Remark \ref{oss:equivalent} the previous is equivalent to \eqref{pitone}.
\vskip.2cm\noindent
Let us now show that
\[
(\mbox{Picone})\qquad \Longrightarrow\qquad  (\mbox{Hidden convexity}).
\]
As we said, inequality \eqref{pitone} is actually equivalent to \eqref{derivatina},
for every $v,u$ and $\sigma_t$ curve of the form \eqref{curvetta} connecting them. We now fix $u,v$ and $\sigma_t$, then by \eqref{derivatina} we get
\[
H(\nabla v)-H(\nabla \sigma_t)\ge \frac{d}{ds} H(\nabla\widetilde\sigma_s)_{|s=0},
\]  
and
\[
H(\nabla u)-H(\nabla \sigma_t)\ge \frac{d}{ds} H(\nabla\widehat\sigma_s)_{|s=0},
\]
where $s\mapsto \widetilde \sigma_s$ and $s\mapsto \widehat \sigma_s$ are the curves of the form \eqref{curvetta} connecting $\sigma_t$ to $v$ and $\sigma_t$ to $u$ respectively. In other words, we have
\[
\widetilde \sigma_s=\Big[\left(1-t-s\,(1-t)\right)\, u^q+\left(t+s\,(1-t)\right)\, v^q\Big]^\frac{1}{q}=\sigma_{t+s\,(1-t)},\qquad s\in[0,1],
\]
and
\[
\widetilde \sigma_s=\Big[\left(1-t+s\,t\right)\, u^q+\left(t-s\,t \right)\, v^q\Big]^\frac{1}{q}=\sigma_{t-s\,t},\qquad s\in[0,1].
\]
Thus we get
\[
\begin{split}
H(\nabla v)-H(\nabla \sigma_t)\ge \frac{d}{ds} H(\nabla\widetilde\sigma_s)_{|s=0}&=\frac{d}{ds} H(\nabla \sigma_{t+s\,(1-t})_{|s=0}\\
&=\frac{d}{ds} H(\nabla \sigma_{s})_{|s=t}\, (1-t)
\end{split}
\]
and similarly
\[
H(\nabla u)-H(\nabla \sigma_t)\ge -\frac{d}{ds} H(\nabla \sigma_{s})_{|s=t}\, t.
\]
Keeping the two informations together, we finally get
\[
\frac{H(\nabla \sigma_t)-H(\nabla u)}{t}\le \frac{H(\nabla v)-H(\nabla\sigma_t)}{1-t},
\]
which is equivalent to $H(\nabla\sigma_t)\le (1-t)\, H(\nabla u)+t\, H(\nabla v)$.

\section{The discrete case}
\label{sec:4}

We now prove analogous results for functions which are not necessarily differentiable. Roughly speaking, we are going to replace derivatives by finite differences. For $1<p<\infty$ and $0<s<1$, the resulting convexity properties have applications to nonlocal integrals of the type
\[
\int_{\mathbb{R}^N} \int_{\mathbb{R}^N} \frac{|u(x)-u(y)|^p}{|x-y|^{N+s\,p}}\, dx\,dy,\qquad (\mbox{\it Gagliardo seminorm}),
\]
or
\[
\sup_{0<|h|} \int_{\mathbb{R}^N} \frac{|u(x+h)-u(x)|^p}{|h|^{s\,p}}\, dx,\qquad (\mbox{\it Nikolskii seminorm}),
\]
and more generally
\[
\int_{0}^{\infty} \left(\sup_{0<|h|\le t}\int_{\mathbb{R}^N} \frac{|u(x+h)-u(x)|^p}{t^{s\,p}}\, dx\right)^\frac{r}{p}\, \frac{dt}{t},\qquad p\le r<\infty,\quad (\mbox{\it Besov seminorm}).
\]
\begin{prop}[Discrete hidden convexity]
Let $1<p<\infty$ and $1<q\le p$. For every $u_0,u_1\ge 0$, we define
\[
\sigma_t(x)=\Big[(1-t)\, u_0(x)^q+t\, u_1(x)^q\Big]^\frac{1}{q}\qquad t\in[0,1],\,x\in\mathbb{R}^N.
\] 
Then we have
\begin{equation}
\label{discreta!}
|\sigma_t(x)-\sigma_t(y)|^p\le (1-t)\, |u_0(x)-u_0(y)|^p+t\, |u_1(x)-u_1(y)|^p,\quad t\in[0,1],\ x,y\in\mathbb{R}^N.
\end{equation}
\end{prop}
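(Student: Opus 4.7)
The plan is to bypass derivatives entirely and recognize that, for each fixed pair $x,y\in\mathbb{R}^N$, the quantity $\sigma_t(x)$ is nothing but the $\ell^q$-norm of the two-dimensional vector $\bigl((1-t)^{1/q} u_0(x),\, t^{1/q} u_1(x)\bigr)$. This turns the statement into a clean application of the reverse triangle inequality in $(\mathbb{R}^2,\|\cdot\|_{\ell^q})$, together with the elementary power-mean inequality that uses the hypothesis $q\le p$.

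In more detail: I would first fix $x,y\in\mathbb{R}^N$ and introduce
\[
a=\bigl((1-t)^{1/q}u_0(x),\, t^{1/q}u_1(x)\bigr),\qquad b=\bigl((1-t)^{1/q}u_0(y),\, t^{1/q}u_1(y)\bigr),
\]
so that $\|a\|_{\ell^q}=\sigma_t(x)$ and $\|b\|_{\ell^q}=\sigma_t(y)$. The reverse triangle inequality yields
\[
|\sigma_t(x)-\sigma_t(y)|=\bigl|\|a\|_{\ell^q}-\|b\|_{\ell^q}\bigr|\le \|a-b\|_{\ell^q},
\]
and a direct computation gives $\|a-b\|_{\ell^q}^{\,q}=(1-t)|u_0(x)-u_0(y)|^q+t|u_1(x)-u_1(y)|^q$. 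Raising to the $q$-th power therefore yields the intermediate inequality
\[
|\sigma_t(x)-\sigma_t(y)|^q\le (1-t)|u_0(x)-u_0(y)|^q+t|u_1(x)-u_1(y)|^q.
\]

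To upgrade the exponent from $q$ to $p$, I would raise both sides to the power $p/q\ge 1$ and invoke the convexity of $\tau\mapsto \tau^{p/q}$ on $[0,+\infty)$, which yields
\[
\bigl[(1-t)|u_0(x)-u_0(y)|^q+t|u_1(x)-u_1(y)|^q\bigr]^{p/q}\le (1-t)|u_0(x)-u_0(y)|^p+t|u_1(x)-u_1(y)|^p.
\]
Combining the two displays gives the claimed pointwise bound, valid for every $x,y\in\mathbb{R}^N$ and $t\in[0,1]$.

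There is essentially no obstacle: the nonnegativity of $u_0,u_1$ is used only to guarantee that the $q$-th powers $u_i^q$ are well defined and that $\sigma_t$ coincides with an $\ell^q$-norm, and the constraint $q\le p$ enters precisely where it did in the continuous statement, namely in the final monotonicity/convexity step $\tau\mapsto \tau^{p/q}$. The same strategy would also have recovered Proposition~\ref{lm:hidden} by applying it along the linear segment between $x$ and $y$ and integrating, but the discrete triangle-inequality route is noticeably shorter and completely avoids any regularity issue at points where $u_0$ or $u_1$ vanishes.
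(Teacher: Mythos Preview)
Your proof is correct and is essentially identical to the paper's own argument: the paper also writes $\sigma_t$ as the $\ell^q$-norm of the vector $\bigl((1-t)^{1/q}u_0,\,t^{1/q}u_1\bigr)$, applies the reverse triangle inequality to obtain the $q$-th power estimate, and then raises to the power $p/q$ using convexity.
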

\begin{proof}
The proof is as in \cite{FP}, which deals with the case $p=q$. We observe that 
\[
\sigma_t=\left\|\left((1-t)^\frac{1}{q}\,u_0,t^\frac{1}{q}\,u_1\right)\right\|_{\ell^q},
\]
where we set $\|z\|_{\ell^q}=(|z_1|^q+|z_2|^q)^{1/q}$ for $z\in\mathbb{R}^2$. The triangular inequality implies that
\[
\big| \|z\|_{\ell^q}-\|w\|_{\ell^q}\big|^q\le \|z-w\|_{\ell^q}^q,\qquad z,w\in\mathbb{R}^2, 
\] 
and by using this with the choices
\[
z=\left((1-t)^\frac{1}{q}\,u_0(x),t^\frac{1}{q}\,u_1(x)\right)\qquad \mbox{ and }\qquad w=\left((1-t)^\frac{1}{q}\,u_0(y),t^\frac{1}{q}\,u_1(y)\right),
\]
we get
\[
|\sigma_t(x)-\sigma_t(y)|^q\le (1-t)\, |u_0(x)-u_0(y)|^q+t\, |u_1(x)-u_1(y)|^q.
\]
By raising both sides to the power $p/q$ and using the convexity of $\tau\mapsto \tau^{p/q}$, we get \eqref{discreta!}.
\end{proof}

\begin{prop}[Discrete Picone inequality]
\label{prop:disc_pic}
Let $1<p<\infty$ and $1<q\le p$.
Let $u,v$ be two measurable functions with $v\ge 0$ and $u>0$, then
\begin{equation}
\label{pitone_frazione}
\begin{split}
|u(x)-u(y)|^{p-2}\, (u(x)-u(y))\,& \left[\frac{v(x)^q}{u(x)^{q-1}}-\frac{v(y)^q}{u(y)^{q-1}}\right]\\
&\le|v(x)-v(y)|^q\,|u(x)-u(y)|^{p-q}.
\end{split}
\end{equation}
\end{prop}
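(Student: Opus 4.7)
The plan is to mirror the argument in Section~\ref{sec:3} that derives the (local) Picone inequality from hidden convexity by differentiation at $t=0$, adapting it to the discrete setting with the hidden convexity just proved at our disposal. It suffices to handle the case $p=q$: once
\[
|u(x)-u(y)|^{q-2}(u(x)-u(y))\left[\frac{v(x)^q}{u(x)^{q-1}}-\frac{v(y)^q}{u(y)^{q-1}}\right]\le |v(x)-v(y)|^q
\]
is available for every $x,y$, I multiply both sides by the non-negative factor $|u(x)-u(y)|^{p-q}$ and use $p-2=(q-2)+(p-q)$ to recover \eqref{pitone_frazione} in full generality.

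For the $p=q$ case, fix $x,y$ and, without loss of generality, assume $u(x)\ne u(y)$ (the other case being trivial, since both sides of \eqref{pitone_frazione} vanish). Set
\[
\sigma_t(z)=\bigl((1-t)\,u(z)^q+t\,v(z)^q\bigr)^{1/q},\qquad t\in[0,1],\qquad f(t)=|\sigma_t(x)-\sigma_t(y)|^q.
\]
The $\ell^q$-triangle inequality that appears in the proof of the discrete hidden convexity (the step before raising to the power $p/q$) gives
\[
f(t)\le(1-t)\,|u(x)-u(y)|^q+t\,|v(x)-v(y)|^q,\qquad t\in[0,1],
\]
with equality at both endpoints $t=0,1$. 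Since $\sigma_t(x)-\sigma_t(y)$ stays of constant sign in a neighbourhood of $t=0$, the function $f$ is smooth there, and letting $t\to 0^+$ in the difference quotient yields the one-sided derivative bound
\[
f'(0)\le |v(x)-v(y)|^q-|u(x)-u(y)|^q.
\]

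The remaining step is an explicit computation. Differentiating $\sigma_t(z)^q=(1-t)\,u(z)^q+t\,v(z)^q$ at $t=0$ gives $\sigma_t'(z)|_{t=0}=(v(z)^q-u(z)^q)/(q\,u(z)^{q-1})$, and then the chain rule yields
\[
f'(0)=|u(x)-u(y)|^{q-2}(u(x)-u(y))\left[\frac{v(x)^q}{u(x)^{q-1}}-\frac{v(y)^q}{u(y)^{q-1}}\right]-|u(x)-u(y)|^q,
\]
exactly as in the local derivation of Section~\ref{sec:3}. Substituting into the derivative bound and cancelling the term $-|u(x)-u(y)|^q$ on both sides delivers \eqref{pitone_frazione} in the case $p=q$. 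I do not anticipate any genuine obstacle: the only delicate point is the smoothness of $f$ at $t=0$, which is neutralised by the trivial reduction to $u(x)\ne u(y)$, and all the remaining algebra reproduces manipulations already performed in the proof of Hidden$\Rightarrow$Picone in Section~\ref{sec:3}.
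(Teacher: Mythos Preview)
Your argument is correct and takes a genuinely different route from the paper's own proof. The paper also reduces to the exponent $q$ and multiplies by $|u(x)-u(y)|^{p-q}$, but from there it proceeds by direct algebraic manipulation: after disposing of the trivial cases, it introduces the substitutions $A=\dfrac{v(x)\,u(y)}{v(y)\,u(x)}$ and $t=\dfrac{u(y)}{u(x)}$ and rewrites \eqref{picone_frazione} as the elementary pointwise inequality $(1-t)^{q-1}(A^q-t)\le |A-t|^q$, which is then quoted from \cite[Lemma~2.6]{FS}. Your proof instead exploits the discrete hidden convexity (more precisely, the intermediate $\ell^q$-triangle step $|\sigma_t(x)-\sigma_t(y)|^q\le (1-t)\,|u(x)-u(y)|^q+t\,|v(x)-v(y)|^q$) and differentiates at $t=0$, exactly mirroring the ``Hidden $\Rightarrow$ Picone'' argument of Section~\ref{sec:3}. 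Note that you only need the sub-chord inequality together with differentiability of $f$ at $t=0$---not convexity of $t\mapsto f(t)$---to get $f'(0)\le f(1)-f(0)$, and you handle this correctly via the reduction to $u(x)\neq u(y)$. The payoff of your approach is that it is self-contained (no call to \cite{FS}) and it explicitly instantiates, in the discrete setting, the equivalence that the paper develops only in the local case; the paper's approach, by contrast, is independent of the preceding hidden-convexity proposition and yields the inequality by a purely algebraic route.
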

\begin{proof}
We notice at first that is sufficient to prove
\begin{equation}
\label{picone_frazione}
\begin{split}
|u(x)-u(y)|^{q-2}\, (u(x)-u(y))\,& \left[\frac{v(x)^q}{u(x)^{q-1}}-\frac{v(y)^q}{u(y)^{q-1}}\right]\le|v(x)-v(y)|^q,
\end{split}
\end{equation}
since \eqref{pitone_frazione} then follows by multiplying the previous inequality by $|u(x)-u(y)|^{p-q}$.
\par
At this aim, let us start by observing that if $u(x)=u(y)$, inequality \eqref{picone_frazione} is trivially satisfied. We take then $u(x)\not=u(y)$ and we can always suppose that $u(x)<u(y)$, up to exchanging the role of $x$ and $y$. We further observe that if $v(y)=0$, inequality \eqref{picone_frazione} is again trivially satisfied, since 
\[
|u(x)-u(y)|^{q-2}\, (u(x)-u(y))\left[\frac{v(x)^q}{u(x)^{q-1}}-\frac{v(y)^q}{u(y)^{q-1}}\right]\le 0.
\]
We can thus suppose that $v(y)\not =0$, then we rewrite the left-hand side of \eqref{picone_frazione} as
\[
\begin{split}
|u(x)-u(y)|^{q-2}\,& (u(x)-u(y)) \left[\frac{v(x)^q}{u(x)^{q-1}}-\frac{v(y)^q}{u(y)^{q-1}}\right]\\
&=u(x)^q\,\left(\frac{v(y)}{u(y)}\right)^q\left[ \left(1-\frac{u(y)}{u(x)}\right)^{q-1}\, \left(\left(\frac{v(x)\,u(y)}{v(y)\,u(x)}\right)^q-\frac{u(y)}{u(x)}\right)\right]
\end{split}
\] 
while the right-hand side of \eqref{picone_frazione} rewrites as
\[
\begin{split}
|v(x)-v(y)|^q=u(x)^q\,\left(\frac{v(y)}{u(y)}\right)^q\,\left|\left(\frac{v(x)\,u(y)}{v(y)\,u(x)}\right)-\frac{u(y)}{u(x)}\right|^q.
\end{split}
\]
Then if we set
\[
A=\frac{v(x)\,u(y)}{v(y)\,u(x)}\qquad \mbox{ and }\qquad t=\frac{u(y)}{u(x)},
\]
the previous manipulations show that \eqref{pitone_frazione} is equivalent to the following
\[
(1-t)^{q-1}\, (A^q-t)\le |A-t|^{q},\qquad \mbox{ for } 0\le t\le 1,
\]
The previous elementary inequality is true (see \cite[Lemma 2.6]{FS}), thus we get the desired conclusion.
\end{proof}

\section{Some applications: local integrals}\label{sec:local}
\label{sec:5}

\subsection{Positive eigenfunctions}

In what follows, we denote by $\Omega\subset\mathbb{R}^N$ an open connected set such that $|\Omega|<\infty$. For $1<p<\infty$, as it is customary we denote by $W^{1,p}_0(\Omega)$ the closure of $C^\infty_0(\Omega)$ with respect to the $L^p$ norm of the gradient. We also take $H:\mathbb{R}^N\to [0,\infty)$ to be a $C^1$ convex $p-$homogeneous function such that 
\[
\frac{1}{C}\, |z|^p\le H(z)\le C\, |z|^p,\qquad z\in\mathbb{R}^N,
\]
for some $C\ge 1$.
Then for $1<q\le p$, we set
\begin{equation}
\label{autolavoro}
\lambda_{p,q}(\Omega)=\min_{u\in W^{1,p}_0(\Omega)}\left\{\int_\Omega H(\nabla u)\, dx\, :\, \|u\|_{L^q(\Omega)}=1\right\}.
\end{equation}
\begin{thm}[Uniqueness of positive eigenfunctions]
\label{teo:eigen}
Let $1<p<\infty$ and $1<q\le p$. Let $\lambda>0$ be such that there exists a non trivial function $u\in W^{1,p}_0(\Omega)$ verifying
\[
-\frac{1}{p}\,\mathrm{div\,} \nabla H(\nabla u)=\lambda\,  u^{q-1},\qquad u\ge 0,\quad \mbox{ in }\Omega.
\]
Then we have
\begin{equation}
\label{dopolavoro_ferroviario}
\lambda\, \left(\int_\Omega |u|^q\, dx\right)^\frac{q-p}{q}=\lambda_{p,q}(\Omega),
\end{equation}
and $v=u\,\|u\|_{L^q(\Omega)}^{-1}$ is a minimizer of the variational problem in \eqref{autolavoro}.
\end{thm}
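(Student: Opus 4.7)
The plan is to use the generalized Picone inequality from Proposition~\ref{lm:pitone} to bound $\lambda$ from above by $\lambda_{p,q}(\Omega)\,\|u\|_{L^q(\Omega)}^{p-q}$, and then to obtain the reverse bound immediately by using $v=u/\|u\|_{L^q(\Omega)}$ as an admissible competitor in the variational problem \eqref{autolavoro}.

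First I would fix a non-negative minimizer $w\in W^{1,p}_0(\Omega)$ of \eqref{autolavoro}, whose existence follows from the direct method thanks to the coercivity $H(z)\ge C^{-1}|z|^p$ and the compact embedding $W^{1,p}_0(\Omega)\hookrightarrow L^q(\Omega)$. Since $u\ge 0$ is a non-trivial weak solution on the connected open set $\Omega$ of a quasilinear equation with principal part $-\mathrm{div}\,\nabla H(\nabla u)$, a standard strong minimum principle of Tolksdorf--V\'azquez type yields $u>0$ in $\Omega$. This strict positivity is what allows me to employ $\varphi = w^q/u^{q-1}$ as a test function in the weak formulation of the equation, via the customary regularization (replace $u$ by $u+\varepsilon$, truncate $w$ at level $M$, then pass to the limit using the $L^\infty$-bounds on eigenfunctions).

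Testing the equation for $u$ with $\varphi=w^q/u^{q-1}$ and applying Picone's inequality \eqref{pitone} pointwise, then integrating over $\Omega$, one obtains
\[
\lambda = \frac{1}{p}\int_\Omega \left\langle\nabla H(\nabla u),\nabla\!\left(\frac{w^q}{u^{q-1}}\right)\right\rangle dx \le \int_\Omega H(\nabla w)^{q/p}\,H(\nabla u)^{(p-q)/p}\,dx.
\]
Applying H\"older with exponents $p/q$ and $p/(p-q)$ (trivial when $q=p$) on the right-hand side, using $\|w\|_{L^q}=1$ and $\int_\Omega H(\nabla w)\,dx = \lambda_{p,q}(\Omega)$, together with the energy identity $\int_\Omega H(\nabla u)\,dx = \lambda\,\|u\|_{L^q}^q$ (obtained by testing the equation for $u$ with $u$ itself and invoking the Euler relation $\langle\nabla H(z),z\rangle = p\,H(z)$ coming from $p$-homogeneity), the previous display becomes
\[
\lambda \le \lambda_{p,q}(\Omega)^{q/p}\bigl(\lambda\,\|u\|_{L^q}^q\bigr)^{(p-q)/p},
\]
which simplifies to $\lambda\,\|u\|_{L^q}^{q-p} \le \lambda_{p,q}(\Omega)$.

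For the matching lower bound, $v=u/\|u\|_{L^q}$ is admissible in \eqref{autolavoro}, and by $p$-homogeneity $\int_\Omega H(\nabla v)\,dx = \|u\|_{L^q}^{-p}\int_\Omega H(\nabla u)\,dx = \lambda\,\|u\|_{L^q}^{q-p}$, so by definition of $\lambda_{p,q}(\Omega)$ this value is at least $\lambda_{p,q}(\Omega)$. Combining with the previous step gives \eqref{dopolavoro_ferroviario} and simultaneously shows that the competitor $v$ realizes the infimum. The main obstacle is the rigorous use of $w^q/u^{q-1}$ as a test function; this is the step where the strong minimum principle is essential and where the approximation argument sketched above has to be carried out carefully in order to pass to the limit in the integral identities.
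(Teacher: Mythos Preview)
Your proof is correct and follows essentially the same approach as the paper: invoke the strong minimum principle to ensure $u>0$, test the equation with $w^q/u^{q-1}$ where $w$ is a first eigenfunction, apply the generalized Picone inequality \eqref{pitone}, and finish with H\"older; the reverse inequality comes from using $v=u/\|u\|_{L^q}$ as a competitor. The only cosmetic difference is that the paper first normalizes $u$ to $v$ and tests the equation for $v$, whereas you work directly with $u$; your additional remarks on the approximation needed to justify the test function go slightly beyond what the paper writes but are appropriate.
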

\begin{proof}
We first observe that $u>0$ almost everywhere\footnote{Actually, the much stronger result
\[
\inf_K u\ge \frac{1}{C_K},\qquad \mbox{ for every compact }K\Subset\Omega,
\]
holds. Here we want to point out that the weaker information ``{\it $u>0$ almost everywhere}'' suffices for this argument to work.} in $\Omega$, by the strong minimum principle (see for example \cite[Theorem 1.2]{Tr}). We also notice that the case $p=q$ is now well-established (see for example \cite{AH,BF,Ja,TTU}), we limit ourselves to consider the case $q<p$.
\vskip.2cm\noindent
The proof is just based on an application of Proposition \ref{lm:pitone}. We observe that $v$ is a solution of 
\[
-\frac{1}{p}\,\mathrm{div\,}\nabla H(\nabla v)=\lambda\,\|u\|_{L^q(\Omega)}^{q-p}\, v^{q-1},\qquad v> 0,\quad \mbox{ in }\Omega.
\]
Moreover, $v$ is admissible for the variational problems defining $\lambda_{p,q}(\Omega)$, thus by testing the previous equation with $v$ itself and using the homogeneity of $H$, we get
\[
\lambda\, \|u\|_{L^q(\Omega)}^{q-p}\ge \lambda_{p,q}(\Omega).
\]
Let $u_1\in W^{1,p}_0(\Omega)$ be a function achieving the minimum in the right-hand side of \eqref{autolavoro}. Then we have
\[
\begin{split}
\lambda\,\|u\|_{L^p(\Omega)}^{q-p}=\lambda\,\|u\|_{L^p(\Omega)}^{q-p}\,\int_\Omega u_1^{q}\, dx&=\lambda\,\|u\|_{L^p(\Omega)}^{q-p}\, \int_\Omega v^{q-1}\, \frac{u^q_1}{v^{q-1}}\, dx\\
&=\frac{1}{p}\,\int_\Omega \left\langle \nabla H(\nabla v),\nabla\left(\frac{u^q_1}{v^{q-1}}\right)\right\rangle\, dx\\
&\le \int_\Omega H(\nabla u_1)^\frac{q}{p}\, H(\nabla v)^\frac{p-q}{p}\, dx.
\end{split}
\]
If we now apply H\"older's and Young's inequality, the previous gives the desired result.
\end{proof}
\begin{oss}
Of course, a completely equivalent proof of the previous result could use Proposition \ref{lm:hidden}, as in \cite{BF}. As remarked in the Introduction, we believe that a direct application of hidden convexity provides a cleaner justification of the result, while on the other hand Picone inequality offers a quicker proof. An alternative proof can be found in \cite{OT}, later refined by Kawohl and Lindqvist in \cite{KL}.  
\end{oss}
\begin{oss}[Sharpness of the condition $q\le p$]
For a general open set $\Omega$ with finite measure, the previous result {\it can not hold true for $q>p$}. Indeed, let us consider an annular domain $T=\{x\in\mathbb{R}^N\, :\, 1<|x|<r\}$ and take $H(z)=|z|^p$, then the problem
\[
\lambda^{rad}_{p,q}(T)=\min_{W^{1,p}_0(T)}\left\{\int_T |\nabla u|^p\, dx\, :\, u \mbox{ radial function},\|u\|_{L^q(T)}=1\right\},
\]
admits a minimizer $u_0\in W^{1,p}_0(T)$, which is a {\it positive} solution of
\[
-\Delta_p u=\lambda^{rad}_{p,q}(T)\, u^{q-1},\qquad \mbox{ in }T,\qquad \ \mbox{ with }\ \int_T |u|^q\, dx=1.
\] 
On the other hand, Nazarov in \cite[Proposition 1.2]{Na} has proved that if $q>p$ one can always take $r$ sufficiently close to $1$ such that minimizers of \eqref{autolavoro} are not radial. This clearly means that
\[
\lambda^{rad}_{p,q}(T)>\lambda_{p,q}(T).
\]
\end{oss}
\subsection{Hardy-type inequalities}\label{ssec:hardyloc}
As another application of the general Picone inequality \eqref{pitone}, we have the following family of sharp inequalities.
\begin{thm}[Weighted Hardy inequalities with general norms]
\label{thm:hardygen}
Let $F:\mathbb{R}^N\to[0,+\infty)$ be a $C^1$ strictly convex norm. Let $1<p<N$, for every $\gamma> p-N$ we have
\begin{equation}
\label{hardygen}
\left(\frac{N+\gamma-p}{p}\right)^p\,\int_{\mathbb{R}^N} |v|^p\,F_*(x)^{\gamma-p}\, dx\le \int_{\mathbb{R}^{N}} F(\nabla v)^p\,F_*(x)^{\gamma}\, dx, \ \ v\in C^1_0(\mathbb{R}^N\setminus\{0\}),
\end{equation}
where $F_*$ is the dual norm defined by
\[
F_*(z)=\sup_{x\not=0} \left\langle\frac{x}{F(x)},z\right\rangle,\qquad z\in\mathbb{R}^N.
\]
\end{thm}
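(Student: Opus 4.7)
The plan is to deduce \eqref{hardygen} from the Picone inequality \eqref{pitone} applied with $p=q$ and $H(z)=F(z)^p$ to a suitable ``virtual ground state'' of the weighted nonlinear operator $-\mathrm{div}(F_*^\gamma\, F(\nabla u)^{p-1}\nabla F(\nabla u))$. Concretely, I would take
\[
u(x) = F_*(x)^\alpha, \qquad \alpha < 0,
\]
multiply the pointwise Picone inequality by the weight $F_*(x)^\gamma$, and integrate. Since $v \in C^1_0(\mathbb{R}^N\setminus\{0\})$ is supported away from the singularity of $u$ at the origin, an integration by parts gives
\[
-\int_{\mathbb{R}^N} \mathrm{div}\bigl(F_*^\gamma\, F(\nabla u)^{p-1}\nabla F(\nabla u)\bigr)\,\frac{v^p}{u^{p-1}}\,dx
\le \int_{\mathbb{R}^N} F_*(x)^\gamma\, F(\nabla v)^p\, dx,
\]
first for nonnegative $v$. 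The signed case then follows by applying the inequality to $\sqrt{v^2+\varepsilon}$ and letting $\varepsilon\to 0^+$, using the evenness of the norm $F$ which ensures $F(\nabla |v|) = F(\nabla v)$ almost everywhere.

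The heart of the matter is to compute the divergence explicitly. Using the standard duality identities for the $C^1$ strictly convex norm $F$, namely $F(\nabla F_*(x)) = 1$, $\nabla F(\nabla F_*(x)) = x/F_*(x)$, and $\langle x, \nabla F_*(x)\rangle = F_*(x)$, together with the evenness of $F$ (needed because $\nabla u$ is a negative multiple of $\nabla F_*$ when $\alpha<0$), one obtains
\[
F(\nabla u)^{p-1}\nabla F(\nabla u) = -(-\alpha)^{p-1}\,F_*(x)^{(\alpha-1)(p-1)-1}\, x.
\]
Combining this with the elementary identity $\mathrm{div}(F_*^\mu\, x) = (N+\mu)\, F_*^\mu$ yields
\[
-\mathrm{div}\bigl(F_*^\gamma\, F(\nabla u)^{p-1}\nabla F(\nabla u)\bigr) = (-\alpha)^{p-1}\bigl(N+\gamma-p+\alpha(p-1)\bigr)\,F_*^{\gamma-p}\,u^{p-1},
\]
and the exponents in the weighted Picone inequality conspire so that the factor $u^{p-1}$ cancels against $1/u^{p-1}$. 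The Hardy-type inequality thus reduces to
\[
(-\alpha)^{p-1}\bigl(N+\gamma-p+\alpha(p-1)\bigr)\int_{\mathbb{R}^N} v^p\, F_*^{\gamma-p}\, dx
\le \int_{\mathbb{R}^N} F_*^\gamma\, F(\nabla v)^p\, dx.
\]

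It remains to maximize the prefactor over the admissible range of $\alpha$. A one-variable calculus exercise (which uses crucially the hypothesis $N+\gamma-p>0$) gives the optimal choice $\alpha = -(N+\gamma-p)/p$, producing exactly the constant $\bigl((N+\gamma-p)/p\bigr)^p$ claimed in \eqref{hardygen}. The main obstacle is the bookkeeping of signs and exponents: one must handle the sign of $\alpha$ carefully when invoking the duality identities on $\nabla u$, and check that the exponents of $F_*$ on the two sides of the final inequality automatically line up; once these algebraic manipulations are in place, the argument is essentially a single integration by parts against a cleverly chosen weight.
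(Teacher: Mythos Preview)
Your argument is essentially the paper's own proof: the same ground state $u=F_*^{\alpha}$ (the paper writes $\alpha=-\beta$ with $\beta>0$), the same duality identities $F(\nabla F_*)=1$ and $\nabla F(\nabla F_*(x))=x/F_*(x)$, the same application of the Picone inequality with $H=F^p$ and $q=p$, and the same one-variable optimization in the exponent giving the sharp constant.

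There is one slip in your passage to signed $v$: the function $\sqrt{v^2+\varepsilon}$ is \emph{not} compactly supported --- it equals $\sqrt{\varepsilon}$ outside $\mathrm{supp}\,v$ --- so you cannot feed it into the nonnegative case you have just established (indeed $\int_{\mathbb{R}^N}F_*^{\gamma-p}\,dx=+\infty$ when $\gamma>p-N$, so the left-hand side would be infinite). The paper handles this step instead by observing that the nonnegative argument works verbatim for nonnegative \emph{Lipschitz} functions with compact support in $\mathbb{R}^N\setminus\{0\}$, then writes $v=v_+-v_-$ and applies the inequality to each piece, using $|v|^p=v_+^p+v_-^p$ and $F(\nabla v_+)^p+F(\nabla v_-)^p\le F(\nabla v)^p$ almost everywhere. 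Your regularization can also be repaired: replace $\sqrt{v^2+\varepsilon}$ by $\sqrt{v^2+\varepsilon}-\sqrt{\varepsilon}$, which is nonnegative, $C^1$, and genuinely supported in $\mathrm{supp}\,v$, and then pass to the limit by dominated convergence.
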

\begin{proof}
We start proving \eqref{hardygen} for positive $C^1_0(\mathbb{R}^N\setminus\{0\})$ functions. We take $\beta>0$ and set 
\[
u(x)=F_*(x)^{-\beta},\qquad x\in\mathbb{R}^N\setminus\{0\}.
\]
Observe that $u$ is a $C^1$ function in $\mathbb{R}^N\setminus\{0\}$ such that
\begin{equation}
\label{merdatotale}
\begin{split}
-\mathrm{div}(F_*(x)^\gamma&\,F(\nabla u)^{p-1}\,\nabla F(\nabla u))\\
&=\beta^{p-1}\,\Big[N-p-\beta\, (p-1)+\gamma\Big]\, u^{p-1}\, F_*(x)^{\gamma-p},\quad \mbox{ in }\mathbb{R}^{N}\setminus\{0\}.
\end{split}
\end{equation}
Indeed, we recall the following relations between $F$ and $F_*$ (see \cite{Sc} for example)
\begin{equation}
\label{magic}
F(\nabla F_*(x))=1\qquad \mbox{ and }\qquad \nabla F(\nabla F_*(x))=\frac{x}{F_*(x)},\qquad x\not=0.
\end{equation}
Of course
\[
\nabla u=-\beta\, F_*(x)^{-\beta-1}\,\nabla F_*(x),\qquad x\not=0,
\]
by using \eqref{magic} and the homogeneity of $F$ we get
\[
F(\nabla u)^{p-1}=\beta^{p-1}\, F_*(x)^{-(\beta+1)\,(p-1)}=\beta^{p-1}\, F_*(x)^{-\beta\,p+\beta-p},
\]
and still by \eqref{magic} and the fact that $\nabla F$ is $0-$homogeneous, we also have
\[
\nabla F(\nabla u)=-\nabla F(\nabla F_*(x))=-\frac{x}{F_*(x)},\qquad x\not=0.
\]
Thus we get
\[
\begin{split}
-\mathrm{div}\left(F_*(x)^\gamma\,F(\nabla u)^{p-1}\, \nabla F(\nabla u)\right)&=\beta^{p-1}\, \mathrm{div}\left(F_*(x)^{-\beta\,p+\beta-p+\gamma}\, x\right)\\
&=\beta^{p-1}\, [N-\beta\,p+\beta-p+\gamma]\, F_*(x)^{-\beta\,(p-1)+\gamma-p},
\end{split}
\]
as desired, where we used that
\[
\langle \nabla F_*(x),x\rangle=F_*(x),\qquad x\not=0,
\]
again by homogeneity. Finally, by using the definition of $u$, we get 
\[
F_*(x)^{-\beta\,(p-1)+\gamma-p}=u^{p-1}\, F_*(x)^{\gamma-p}.
\]
Then $u$ verifies
\[
\begin{split}
C_{N,p}\, \int_{\mathbb{R}^N} u^{p-1}\, &F_*(x)^{\gamma-p}\,\varphi\, dx\\
&=\int_{\mathbb{R}^N} F_*(x)^\gamma\,F(\nabla u)^{p-1}\,\langle \nabla F(\nabla u),\nabla\varphi\rangle\, dx,\qquad \varphi\in C^1_0(\mathbb{R}^N\setminus\{0\}).
\end{split}
\]
We test the previous equation with $\varphi=v^{p}\, u^{1-p}$, where $v\in C^1_0(\mathbb{R}^N\setminus\{0\})$ is positive. We get
\[
\begin{split}
\beta^{p-1}\,\Big[N-\beta\,p+\beta-p+\gamma\Big]&\int_{\mathbb{R}^N} v^p\,F_*(x)^{\gamma-p}\, dx\\
&=\int_{\mathbb{R}^N} F_*(x)^\gamma\,F(\nabla u)^{p-1}\left\langle \nabla F(\nabla u),\nabla\left(\frac{v^p}{u^{p-1}}\right)\right\rangle\, dx.
\end{split}
\]
We can now use Proposition \ref{lm:pitone} with the choice $H(z)=F(z)^p$,
so to obtain
\[
\beta^{p-1}\,\Big[N-\beta\,p+\beta-p+\gamma\Big]\,\int_{\mathbb{R}^N} v^p\,F_*(x)^{\gamma-p}\, dx\le  \int_{\mathbb{R}^N} F(\nabla v)^p\, F_*(x)^{\gamma} \, dx
\]
In order to conclude, it is now sufficient to observe that the function
\[
\beta\mapsto \beta^{p-1}\,\Big[N-\beta\, p+\beta-p+\gamma\Big],
\]
is maximal for $\beta=(N+\gamma-p)/p$. The previous argument gives \eqref{hardygen} for a positive $v\in C^1_0(\mathbb{R}^N\setminus\{0\})$. Of course, the previous proof is still valid for positive Lipschitz functions supported in $\mathbb{R}^N\setminus\{0\}$.
The result for a general $v\in C^1_0(\mathbb{R}^N\setminus\{0\})$ then follows by writing $v=v_+-v_-$ and 
observing that $v_+,v_-$ are positive Lipschitz functions with support in $\mathbb{R}^N\setminus\{0\}$. 
\end{proof}
\begin{oss}
By taking $\gamma=0$ in \eqref{hardygen}, we have the usual Hardy inequality on the whole space with respect to a general norm, i.e.
\begin{equation}
\label{hardyvanbasten}
\left(\frac{N-p}{p}\right)^p\, \int_{\mathbb{R}^N} \left(\frac{|v|}{F_*(x)}\right)^p\, dx\le \int_{\mathbb{R}^N} F(\nabla v)^p\, dx
\end{equation}
 A different proof of \eqref{hardyvanbasten} (based on symmetrization techniques) can be found in \cite[Proposition 7.5]{Van}. For $\gamma\not =0$ and $F$ being the Euclidean norm, a related inequality can be found in \cite{ACP}.
\end{oss}

\section{Some applications: nonlocal integrals}
\label{sec:6}

\subsection{Positive eigenfunctions}
We denote by $\Omega\subset\mathbb{R}^N$ an open connected set, which is now supposed to be bounded. 
Let $1<p<\infty$ and $0<s<1$, in what follows we denote by $W^{s,p}_0(\Omega)$ the completion of $C^\infty_0(\Omega)$ with respect to the norm
\[
\|u\|_{W^{s,p}_0(\Omega)}=\left(\int_{\mathbb{R}^N}\int_{\mathbb{R}^N} \frac{|u(x)-u(y)|^{p}}{|x-y|^{N+s\,p}}\, dx\, dy\right)^\frac{1}{p}.
\]
As before, for $1<q\le p$ we introduce the first eigenvalue
\begin{equation}
\label{autolavoro_s}
\lambda^s_{p,q}(\Omega)=\min_{u\in W^{s,p}_0(\Omega)}\left\{\|u\|^p_{W^{s,p}_0(\Omega)}\, :\, \|u\|_{L^q(\Omega)}=1\right\},
\end{equation}
the reader is referred to \cite{BLP,FP,LL} for a more detailed account about the case $q=p$.
We notice that a minimizer $u$ of the previous problem is a weak solution of
\[
(-\Delta_p)^s u=\lambda^s_{p,q}(\Omega)\,  |u|^{q-2}\, u,\qquad \mbox{ in }\Omega,
\]
which means that 
\[
\int_{\mathbb{R}^N}\int_{\mathbb{R}^N} \frac{|u(x)-u(y)|^{p-2}\, \big(u(x)-u(y)\big)}{|x-y|^{N+s\,p}}\, \big(\varphi(x)-\varphi(y)\big)\, dx\, dy=\lambda^s_{p,q}(\Omega)\, \int_\Omega |u|^{q-2}\, u\, \varphi\, dx,
\]
for every $\varphi\in W^{s,p}_0(\Omega)$.
\begin{thm}[Uniqueness of positive eigenfunctions]
Let $1<p<\infty$, $0<s<1$ and $1<q\le p$. Let $\lambda>0$ be such that there exists a non trivial function $u\in W^{s,p}_0(\Omega)$ verifying
\[
(-\Delta_p)^s u=\lambda\,  |u|^{q-2}\, u,\qquad u\ge 0,\quad \mbox{ in }\Omega.
\]
Then we have
\begin{equation}
\label{sdopolavoro_ferroviario}
\lambda\, \left(\int_\Omega |u|^q\, dx\right)^\frac{q-p}{q}=\lambda_{p,q}(\Omega),
\end{equation}
and $v=u\,\|u\|_{L^q(\Omega)}^{-1}$ is a minimizer of the problem in \eqref{autolavoro_s}.
\end{thm}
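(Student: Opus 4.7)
The plan is to mirror the strategy of Theorem~\ref{teo:eigen} in the nonlocal setting, substituting the discrete Picone inequality (Proposition~\ref{prop:disc_pic}) for its differentiable counterpart. As in the local case, the argument for $q=p$ is already available in the literature (see e.g.\ \cite{BF,FP,BLP}), so I will focus on the genuine range $1<q<p$.

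First I would reduce to a positive, normalized eigenfunction. Setting $v=u/\|u\|_{L^q(\Omega)}$, the homogeneity of $(-\Delta_p)^s$ gives
\[
(-\Delta_p)^s v=\lambda\,\|u\|_{L^q(\Omega)}^{q-p}\,v^{q-1}\quad\text{in }\Omega,\qquad \|v\|_{L^q(\Omega)}=1,
\]
and the strong minimum principle (Theorem~\ref{teo:minprin} in the appendix, used here on $\Omega$ connected) upgrades $v\ge 0$ to $v>0$ a.e.\ in $\Omega$, which is what is needed to use $v$ in denominators. Testing the equation for $v$ against $v$ itself yields $\|v\|_{W^{s,p}_0(\Omega)}^p=\lambda\,\|u\|_{L^q(\Omega)}^{q-p}$, and since $v$ is admissible for \eqref{autolavoro_s} we immediately obtain the trivial bound $\lambda\,\|u\|_{L^q(\Omega)}^{q-p}\ge\lambda^s_{p,q}(\Omega)$.

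For the sharp inequality I would pick a minimizer $u_1$ of \eqref{autolavoro_s} and apply the discrete Picone inequality \eqref{pitone_frazione} pointwise with the roles $u\leftarrow v$, $v\leftarrow u_1$:
\[
|v(x)-v(y)|^{p-2}(v(x)-v(y))\left[\frac{u_1(x)^q}{v(x)^{q-1}}-\frac{u_1(y)^q}{v(y)^{q-1}}\right]\le |u_1(x)-u_1(y)|^{q}\,|v(x)-v(y)|^{p-q}.
\]
Dividing by $|x-y|^{N+sp}$ and integrating over $\mathbb{R}^N\times\mathbb{R}^N$, the left-hand side is exactly the weak formulation of the equation for $v$ tested against $\varphi=u_1^q/v^{q-1}$, hence it equals $\lambda\,\|u\|_{L^q(\Omega)}^{q-p}\int_\Omega u_1^q\,dx=\lambda\,\|u\|_{L^q(\Omega)}^{q-p}$. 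On the right-hand side, an application of Hölder's inequality with exponents $p/q$ and $p/(p-q)$ gives
\[
\iint_{\mathbb{R}^N\times\mathbb{R}^N}\frac{|u_1(x)-u_1(y)|^{q}\,|v(x)-v(y)|^{p-q}}{|x-y|^{N+sp}}\,dx\,dy\le\|u_1\|_{W^{s,p}_0(\Omega)}^{q}\,\|v\|_{W^{s,p}_0(\Omega)}^{p-q}=\lambda^s_{p,q}(\Omega)^{q/p}\,(\lambda\,\|u\|_{L^q(\Omega)}^{q-p})^{(p-q)/p}.
\]
Combining the two estimates and simplifying one common factor yields $(\lambda\,\|u\|_{L^q(\Omega)}^{q-p})^{q/p}\le \lambda^s_{p,q}(\Omega)^{q/p}$, which together with the trivial direction gives \eqref{sdopolavoro_ferroviario}. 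Since $v$ is admissible and achieves $\lambda^s_{p,q}(\Omega)=\|v\|_{W^{s,p}_0(\Omega)}^p$, it is a minimizer.

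The step I expect to be the main obstacle is the admissibility of the test function $\varphi=u_1^q/v^{q-1}$ in the weak formulation: a priori this only belongs to $W^{s,p}_0(\Omega)$ under some regularity and positivity estimates on $v$. The standard remedy is to approximate, replacing $v$ by $v+\varepsilon$ in the denominator and $u_1$ by a bounded truncation $\min\{u_1,M\}$ (both of which are Lipschitz nonlinearities of functions in $W^{s,p}_0(\Omega)$ and hence remain in that space), run the argument above, and finally send $M\to\infty$ and $\varepsilon\to 0^+$ using monotone/dominated convergence in $L^q(\Omega)$ on the right and Fatou's lemma on the bilinear form on the left; the pointwise Picone inequality \eqref{pitone_frazione} is stable under these approximations since $\varepsilon+v>0$ everywhere. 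All the other calculations are algebraic manipulations of Hölder exponents and closely follow the template laid out in Theorem~\ref{teo:eigen}.
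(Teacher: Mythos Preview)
Your proposal is correct and follows essentially the same route as the paper: positivity via Theorem~\ref{teo:minprin}, the trivial direction by testing against $v$, then the discrete Picone inequality \eqref{pitone_frazione} with a minimizer $u_1$, followed by H\"older with exponents $p/q$ and $p/(p-q)$. You are in fact more careful than the paper in flagging and handling the admissibility of $\varphi=u_1^q/v^{q-1}$ via an $\varepsilon$-regularization and truncation, a point the paper passes over in silence.
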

\begin{proof}
At first, it is again crucial to observe that $u>0$ almost everywhere in $\Omega$, thanks to the minimum principle of Theorem \ref{teo:minprin}. 
Then the result for the case $p=q$ follows by using \cite[Theorem 4.1]{FP}. We now consider the case $q<p$.
\vskip.2cm\noindent
Again, we observe that $v$ solves
\[
(-\Delta_p)^s v=\lambda\,\|u\|_{L^q(\Omega)}^{q-p}\, v^{q-1},\qquad u> 0,\quad \mbox{ in }\Omega.
\]
and since $v$ is admissible for the variational problems defining $\lambda^s_{p,q}(\Omega)$ we get
\[
\lambda\, \|u\|_{L^q(\Omega)}^{q-p}\ge \lambda^s_{p,q}(\Omega).
\]
Let $u_1\in W^{s,p}_0(\Omega)$ be a function achieving the minimum in the right-hand side of \eqref{autolavoro_s}. Then again we have
\[
\begin{split}
\lambda\,\|u\|_{L^q(\Omega)}^{q-p}&=\lambda\,\|u\|_{L^q(\Omega)}^{q-p}\, \int_\Omega v^{q-1}\, \frac{u^q_1}{v^{q-1}}\, dx\\
&=\int_{\mathbb{R}^N} \int_{\mathbb{R}^N} \frac{|v(x)-v(y)|^{p-2}\, (v(x)-v(y))}{|x-y|^{N+s\,p}}\, \left[\frac{u_1(x)^q}{v(x)^{q-1}}-\frac{u_1(y)^q}{v(y)^{q-1}}\right]\, dx\,dy\\
&\le \int_{\mathbb{R}^N} \int_{\mathbb{R}^N} \frac{|u_1(x)-u_1(y)|^q}{|x-y|^{N\, \frac{q}{p}+s\,q}}\,\frac{|v(x)-v(y)|^{p-q}}{|x-y|^{N\,\frac{p-q}{p}+s\,(p-q)}}\, dx\, dy \\
\end{split}
\]
where we used Proposition \ref{prop:disc_pic}.
If we now apply H\"older's and Young's inequalities with exponents $p/q$ and $p/(p-q)$, the previous gives the desired result.
\end{proof}

\subsection{Hardy-type inequalities}

As in the local case, by means of the discrete Picone inequality \eqref{pitone_frazione} we can prove a nonlocal Hardy inequality, like in \cite{FS}. The idea is still to look at power-type positive solutions of
\[
(-\Delta_p)^s u=\lambda\, u^{p-1},\qquad \mbox{ in }\mathbb{R}^N\setminus\{0\}.
\]
The latter is the content of the next technical result.
\begin{lm}
\label{lm:coddio}
Let $1<p<\infty$ and $\beta,r>0$. Then the function
\[
f(x)=\int_{\mathbb{R}^N} \frac{\left||x|^{-\beta}-|y|^{-\beta}\right|^{p-2}\,\left(|x|^{-\beta}-|y|^{-\beta}\right)}{|x-y|^{N+r}}\, dy,\qquad x\in\mathbb{R}^N,
\]
is radial and $\beta\,(1-p)-r$ homogeneous.
\end{lm}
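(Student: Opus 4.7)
I would prove both properties by the same mechanism: two linear changes of variables in the integrand that leave the symmetric structure of the singularity at $y=x$ untouched.

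For radiality, fix an arbitrary rotation $R\in O(N)$ and compute $f(Rx)$ by substituting $y=Rz$. Since Lebesgue measure is rotation-invariant ($dy=dz$), $|Rz|=|z|$, and $|Rx-Rz|=|x-z|$, the integrand is identical to that of $f(x)$ after renaming $z\leftrightarrow y$, so $f(Rx)=f(x)$ for every $R\in O(N)$, which forces $f$ to depend only on $|x|$.

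For the homogeneity, fix $\lambda>0$ and in the definition of $f(\lambda x)$ substitute $y=\lambda z$, so that $dy=\lambda^N\,dz$, $|\lambda x|^{-\beta}-|y|^{-\beta}=\lambda^{-\beta}\bigl(|x|^{-\beta}-|z|^{-\beta}\bigr)$, and $|\lambda x-y|=\lambda\,|x-z|$. The four resulting powers of $\lambda$ are $\lambda^{-\beta(p-2)}$ from the $(p-2)$-th power of the absolute value, $\lambda^{-\beta}$ from the remaining linear factor, $\lambda^{-(N+r)}$ from the denominator, and $\lambda^{N}$ from the Jacobian. Multiplying these together gives $\lambda^{-\beta(p-1)-r}$, and what is left of the integrand equals the integrand of $f(x)$ in the variable $z$. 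Thus
\[
f(\lambda x)=\lambda^{-\beta(p-1)-r}\,f(x)=\lambda^{\beta(1-p)-r}\,f(x),
\]
as required.

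The only real obstacle I foresee is the convergence issue near $y=x$: since $|x|^{-\beta}-|y|^{-\beta}$ is of order $|x-y|$ there, the integrand is of order $|x-y|^{p-1-N-r}$, which is absolutely integrable near the diagonal only when $r<p-1$, and must otherwise be read as a principal value. Both substitutions above, being a rotation and a dilation centred at the origin, preserve the pairing of symmetric points around $x$ used to define the p.v., so the above computations remain valid in that generalised sense and the lemma follows without further technicality.
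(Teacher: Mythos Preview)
Your proof is correct and follows essentially the same route as the paper's: an orthogonal change of variables for radiality (the paper uses the specific reflection sending $x_2$ to $x_1$ with $|x_1|=|x_2|$ rather than a general $R\in O(N)$, but the mechanism is identical) and the dilation $y=\lambda z$ for homogeneity, tracking the same four powers of $\lambda$. Your extra remark on the principal-value interpretation when $r\ge p-1$ is a welcome technical point that the paper leaves implicit.
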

\begin{proof}
Let us pick $x_1\not=x_2$ such that $|x_1|=|x_2|$. Let us set call $R:\mathbb{R}^N\to\mathbb{R}^N$ the linear isometry defined by the reflection in the hyperplan
\[
\pi=\left\{x\in\mathbb{R}^N\, :\, \langle x,x_1-x_2\rangle=0\right\}.
\]
Then by changing variables we have
\[
\begin{split}
f(x_1)&=\int_{\mathbb{R}^N} \frac{\left||x_1|^{-\beta}-|R\,z|^{-\beta}\right|^{p-2}\, \left(|x_1|^{-\beta}-|R\,z|^{-\beta}\right)}{|x_1-R\, z|^{N+r}}\, dz\\
&=\int_{\mathbb{R}^N} \frac{\left||x_2|^{-\beta}-|R\,z|^{-\beta}\right|^{p-2}\, \left(|x_2|^{-\beta}-|R\,z|^{-\beta}\right)}{|R\, x_2-R\, z|^{N+r}}\, dz\\
&=\int_{\mathbb{R}^N} \frac{\left||x_2|^{-\beta}-|z|^{-\beta}\right|^{p-2}\,\left(|x_2|^{-\beta}-|z|^{-\beta}\right)}{|x_2-z|^{N+r}}\, dz=f(x_2),
\end{split}
\]
where we used that $R\, x_2=x_1$, that $|R\, z|=|z|$ and the linearity of $R$. This shows that $f$ is radial. 
\vskip.2cm\noindent
For the second part, it is sufficient to observe that
\[
\begin{split}
f(t\, x)&=\int_{\mathbb{R}^N} \frac{\left|t^{-\beta}\,|x|^{-\beta}-|y|^{-\beta}\right|^{p-2}\, \left(t^{-\beta}\, |x|^{-\beta}-|y|^{-\beta}\right)}{|t\,x-y|^{N+r}}\, dy\\
&=t^{-\beta\,(p-1)-N-r}\,\int_{\mathbb{R}^N} \frac{\left||x|^{-\beta}-|z|^{-\beta}\right|^{p-2}\,(|x|^{-\beta}-|z|^{-\beta})}{|x-z|^{N+r}}\, t^N\, dz\\
&=t^{-\beta\,(p-1)-r} f(x),\qquad x\in\mathbb{R}^N,
\end{split}
\]
for all $t>0$, which gives the desired conclusion.
\end{proof}
We then have the following sharp Hardy inequality for the fractional Sobolev space $W^{s,p}$, first proved in \cite{FS}.
\begin{thm}[Fractional Hardy inequality]\label{thm:hardyfrc}
Let $s\in(0,1)$ and $1<p<\infty$ such that $s\,p<N$. Then there exists a constant $C=C(N,s,p)>0$ (see equation \eqref{costante} below) such that
\begin{equation}
\label{hardy_frazia}
C\,\int_{\mathbb{R}^N} \frac{|v|^p}{|x|^{s\,p}}\,dx \le \int_{\mathbb{R}^N}\int_{\mathbb{R}^N}  \frac{\big| v(x)-v(y)\big|^p}{|x-y|^{N+s\,p}}\,dx\,dy,
\end{equation}
for all $v\in C^\infty_0(\mathbb{R}^N\setminus\{0\})$.
\end{thm}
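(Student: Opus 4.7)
\medskip

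\noindent\textbf{Proof proposal.} The plan is to run the \emph{ground state substitution} technique of Frank and Seiringer, now systematically recast as an application of the discrete Picone inequality of Proposition \ref{prop:disc_pic}. The ground state will be a pure power of $|x|$, mimicking the choice $u(x)=F_*(x)^{-\beta}$ used in the proof of Theorem \ref{thm:hardygen}.

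First I would fix a parameter $\beta>0$, to be optimized later, and set $u(x)=|x|^{-\beta}$. Applying Lemma \ref{lm:coddio} with $r=s\,p$, the function
\[
f(x)=\int_{\mathbb{R}^N}\frac{\big||x|^{-\beta}-|y|^{-\beta}\big|^{p-2}\big(|x|^{-\beta}-|y|^{-\beta}\big)}{|x-y|^{N+s\,p}}\,dy
\]
is radial and positively homogeneous of degree $-\beta(p-1)-s\,p$, so there exists a constant $\Lambda(\beta)\in\mathbb{R}$ with $f(x)=\Lambda(\beta)\,|x|^{-\beta(p-1)-s\,p}$. For $\beta$ in the admissible range (so that $\Lambda(\beta)>0$; this is where the restriction $s\,p<N$ enters) this amounts to the pointwise identity
\[
(-\Delta_p)^s u(x)=2\,\Lambda(\beta)\,|x|^{-s\,p}\,u(x)^{p-1},\qquad x\in\mathbb{R}^N\setminus\{0\}.
\]

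Next I would test this equation against $\varphi=v^{p}\,u^{1-p}$ for a positive $v\in C^\infty_0(\mathbb{R}^N\setminus\{0\})$; since $v$ is supported away from the singularity of $u$, the test function is bounded with compact support, and by a standard symmetrization of the double integral one gets
\[
2\,\Lambda(\beta)\int_{\mathbb{R}^N}\frac{v^p}{|x|^{s\,p}}\,dx=\int_{\mathbb{R}^N}\int_{\mathbb{R}^N}\frac{|u(x)-u(y)|^{p-2}\big(u(x)-u(y)\big)}{|x-y|^{N+s\,p}}\left[\frac{v(x)^p}{u(x)^{p-1}}-\frac{v(y)^p}{u(y)^{p-1}}\right]dx\,dy.
\]
Now I invoke the discrete Picone inequality of Proposition \ref{prop:disc_pic} with $q=p$ to bound the integrand on the right-hand side by $|v(x)-v(y)|^p$, whence
\[
2\,\Lambda(\beta)\int_{\mathbb{R}^N}\frac{v^p}{|x|^{s\,p}}\,dx\le\int_{\mathbb{R}^N}\int_{\mathbb{R}^N}\frac{|v(x)-v(y)|^p}{|x-y|^{N+s\,p}}\,dx\,dy.
\]
To obtain the sharp constant one then maximizes $\Lambda(\beta)$ over admissible $\beta$; the natural scaling suggests the critical value $\beta=(N-s\,p)/p$ and the resulting $C(N,s,p)=2\,\Lambda\big((N-s\,p)/p\big)$, which is exactly the computation deferred to Appendix \ref{sec:B}. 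The sign-changing case is handled by applying the above to $|v|$ (which is Lipschitz with compact support in $\mathbb{R}^N\setminus\{0\}$) and using $||v|(x)-|v|(y)|\le|v(x)-v(y)|$, so no sharpness is lost.

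The main technical obstacle I anticipate is the justification of the weak testing step in the second paragraph: although $v$ vanishes near the origin and at infinity, the function $u=|x|^{-\beta}$ is unbounded near $0$ and not integrable at infinity, so one must verify absolute convergence of the double integrals and check that the symmetrization trick is actually licit. This can be done by splitting $\mathbb{R}^N\times\mathbb{R}^N$ according to whether both points lie in $\mathrm{supp}\,v$, and controlling the tail terms by the decay of $u$ together with $|x-y|^{-N-sp}$, exactly the estimate encoded in the very definition of $\Lambda(\beta)$. Once this is in place, everything reduces to an algebraic optimization carried out in Appendix \ref{sec:B}.
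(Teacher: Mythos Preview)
Your proposal is correct and follows essentially the same route as the paper: take the power $u(x)=|x|^{-\beta}$, use Lemma \ref{lm:coddio} to identify it as a weak solution with eigenvalue $C(\beta)=2\,\Lambda(\beta)$, test against $v^{p}u^{1-p}$, apply the discrete Picone inequality \eqref{pitone_frazione} with $q=p$, and defer the optimization in $\beta$ to Appendix \ref{sec:B}. If anything, you are slightly more careful than the paper, which treats only positive $v$ and does not spell out the integrability check for the testing step; your reduction of the sign-changing case via $|v|$ and the remark on absolute convergence of the double integral are welcome additions.
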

\begin{proof}
We recall that a positively $\alpha-$homogeneous function $u$ which is radially simmetric, that is
\[
u(x) = \varphi(|x|)\, \qquad x\in\mathbb{R}^N\,,
\]
is uniquely determined modulo a multiplicative constant, namely $u(x)=\varphi(1)\, |x|^\alpha$. By this elementary
observation, we can deduce from Lemma \ref{lm:coddio} that the function $u(x)=|x|^{-\beta}$ is a solution of
\[
\begin{split}
\int_{\mathbb{R}^N}\int_{\mathbb{R}^N} &\frac{\left|u(x)-u(y)\right|^{p-2}\,\left(u(x)-u(y)\right)}{|x-y|^{N+s\,p}}\, (\varphi(x)-\varphi(y))\, dx\,dy\\
&= C(\beta)\, \int_{\mathbb{R}^N} u^{p-1}\, |x|^{-s\,p}\, \varphi(x)\, dx, \qquad \mbox{ for all } \varphi\in C^\infty_0(\mathbb{R}^N\setminus\{0\}),
\end{split}
\]
where
\[
C(\beta)=2\,\int_{\mathbb{R}^N} \frac{\left||x|^{-\beta}-|y|^{-\beta}\right|^{p-2}\,\left(|x|^{-\beta}-|y|^{-\beta}\right)}{|x-y|^{N+s\,p}}\, dy,\qquad x\in\mathbb{S}^{N-1},
\]
and the previous integral is constant for $x\in\mathbb{S}^{N-1}$, thanks to Lemma \ref{lm:coddio}.
Then, if one picks $v\in C^\infty_0(\mathbb{R}^N\setminus\{0\})$ positive and plugs in $\varphi = v^p\,u^{1-p}$ as a test function in the previous equation, it turns out that
\begin{equation}
\label{belliffima}
C(\beta)\, \int_{\mathbb{R}^N} |v|^p\, |x|^{-s\,p}\,dx\le\int_{\mathbb{R}^N} \int_{\mathbb{R}^N}\frac{|v(x)-v(y)|^p}{|x-y|^{N+sp}}\,dx\,dy.
\end{equation}
An optimization over $\beta$ leads to the desired result, see Appendix B for more details.
\end{proof}

\appendix

\section{A minimum principle for positive nonlocal eigenfunctions}
\label{sec:A}

In the following, we provide a proof of a minimum principle for positive weak supersolutions
to equation
\[
(-\Delta_p)^s u=0,\quad \mbox{ in }\Omega,\qquad\qquad u\equiv 0\quad \mbox{ in }\mathbb{R}^N\setminus\Omega.
\]
i.e. for functions $u\in W^{s,p}_0(\Omega)$ such that
\[
\int_{\mathbb{R}^N}\int_{\mathbb{R}^N} \frac{|u(x)-u(y)|^{p-2}(u(x)-u(y))}{|x-y|^{N+sp}}\,(\varphi(x)-\varphi(y))\, dx\,dy \ge 0, \quad \mbox{ for all} \ \varphi \in C_0^\infty(\Omega), \ \varphi\ge0.
\]
Let $x_0$ be any fixed point in $\Omega$, and for every $r>0$ let $B_r(x_0)$ denote the ball of radius $r$ centered at $x_0$. The main ingredient of our result is the following lemma, 
which is a consequence of a more general logarithmic estimate recently established by Di Castro, Kuusi and Palatucci in \cite{DKP}.
\begin{dkp}
Let $1<p<\infty$, $s\in(0,1)$ and $u\in W^{s,p}_0(\Omega)$ be a supersolution such that $u\ge 0$ in $B_{2\,r}(x_0)\Subset\Omega$. Then for every $0<\delta<1$ there holds
\begin{equation}
\begin{split}
\label{Cacciotail}
\int_{B_r} \int_{B_r} \left|\log\left(\frac{\delta+u(x)}{\delta+u(y)}\right)\right|^p&\frac{1}{|x-y|^{N+s\,p}} \, dx\,dy\\
& \le C\, r^{N-s\,p} \left\{\delta^{1-p}\, r^{s\,p}\,\int_{\mathbb{R}^N\setminus B_{2\,r} } \frac{u_-(y)^{p-1}}{|y-x_0|^{N+sp}}\,dy+ 1\right\},
\end{split}
\end{equation}
where $u_-=\max\{-u,0\}$ and $C=C(N,p,s)>0$ is a constant.
\end{dkp}
We then have the following minimum principle. 
\begin{thm}
\label{teo:minprin}
Let $\Omega\subset\mathbb{R}^N$ be an open bounded set, which is connected. Let $s\in(0,1)$, $1<p<\infty$ and $u\in W^{s,p}_0(\Omega)$ be a weak supersolution such that $u\ge 0$ in $\Omega$. 
Let us suppose that
\begin{equation}
\label{ipotesi}
u\not\equiv 0\qquad \mbox{ in } \Omega.
\end{equation}
Then $u>0$ almost everywhere in $\Omega$.
\end{thm}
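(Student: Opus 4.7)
The backbone of the proof is the DKP logarithmic lemma combined with a topological dichotomy argument. First I would observe that since $u\in W^{s,p}_0(\Omega)$ extends by zero outside $\Omega$ and is nonnegative inside, we have $u_-\equiv 0$ on the whole of $\mathbb{R}^N$. Consequently the tail integral in \eqref{Cacciotail} vanishes identically, and for every ball $B_r(x_0)$ with $B_{2\,r}(x_0)\Subset\Omega$ we obtain the crucial uniform-in-$\delta$ estimate
\[
\int_{B_r(x_0)}\!\int_{B_r(x_0)}\left|\log\!\left(\frac{\delta+u(x)}{\delta+u(y)}\right)\right|^p\frac{dx\,dy}{|x-y|^{N+s\,p}}\le C\, r^{N-s\,p},\qquad \delta>0.
\]

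The core step is the following \emph{local dichotomy}: for every such ball $B_r(x_0)$, either $u=0$ a.e.\ in $B_r(x_0)$ or $u>0$ a.e.\ in $B_r(x_0)$. To prove it, set $Z=\{u=0\}$ and $P=\{u>0\}$, and suppose by contradiction that both $|Z\cap B_r(x_0)|>0$ and $|P\cap B_r(x_0)|>0$. For every $(x,y)\in (Z\cap B_r(x_0))\times (P\cap B_r(x_0))$ one has
\[
\left|\log\!\left(\frac{\delta+u(x)}{\delta+u(y)}\right)\right|^p=\left[\log\!\left(1+\frac{u(y)}{\delta}\right)\right]^p\xrightarrow[\delta\to 0^+]{}+\infty,
\]
and more generally the integrand in the DKP bound is monotone nonincreasing in $\delta$ (write $|\log((\delta+a)/(\delta+b))|=\log(1+|a-b|/(\delta+\min\{a,b\}))$). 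Since $|x-y|^{-N-s\,p}$ is strictly positive on this product set of positive Lebesgue measure, monotone convergence forces the left-hand side of the DKP inequality to diverge as $\delta\to 0^+$, contradicting the uniform bound.

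The dichotomy is then promoted to a global statement via connectedness. Introduce
\[
\Omega_+=\{x\in\Omega\,:\, u>0 \text{ a.e.\ in some } B_r(x)\text{ with } B_{2\,r}(x)\subset\Omega\},
\]
\[
\Omega_0=\{x\in\Omega\,:\, u=0 \text{ a.e.\ in some } B_r(x)\text{ with } B_{2\,r}(x)\subset\Omega\}.
\]
Both sets are obviously open; they are disjoint (the two alternatives in the local dichotomy are incompatible inside any common ball), and since $\Omega$ is open, every $x\in\Omega$ admits an admissible radius $r$, so $\Omega_+\cup\Omega_0=\Omega$. Connectedness of $\Omega$ then forces exactly one of the two sets to be empty, and the assumption \eqref{ipotesi} rules out $\Omega_+=\emptyset$. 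Therefore $\Omega=\Omega_+$, which yields $u>0$ a.e.\ in $\Omega$.

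The main obstacle is the local dichotomy: one must carefully exploit that the DKP estimate is independent of $\delta$ and that the tail term vanishes identically (both facts hinging on $u_-\equiv 0$), and then pass to the limit $\delta\to 0^+$ on a set of positive product measure. Once the dichotomy is in place, the connectedness step is routine.
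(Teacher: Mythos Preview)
Your proof is correct and rests on the same key ingredient as the paper's --- the DKP logarithmic lemma with vanishing tail --- but your implementation is more streamlined in two respects. For the local step, the paper first extracts from the DKP double integral a \emph{single}-integral bound via a Poincar\'e-type inequality for functions vanishing on a set of positive measure (namely $\int_{B_{r/2}}|F_\delta|^p\le C\,r^{2N}/|Z|$ with $F_\delta=\log(1+u/\delta)$), and only then lets $\delta\to 0$; you bypass this intermediate step by sending $\delta\to 0$ directly in the double integral, exploiting the monotonicity of the integrand on the product set $(Z\cap B_r)\times(P\cap B_r)$. For the global step, the paper covers a connected compact $K\Subset\Omega$ by a finite chain of overlapping balls, propagates the vanishing of $u$ from ball to ball, and then exhausts $\Omega$ by such compacts; your open--closed decomposition $\Omega=\Omega_+\cup\Omega_0$ achieves the passage from local to global in a single stroke of connectedness. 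The two routes are equivalent in substance, but yours is somewhat more direct.
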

\begin{proof}
We first prove that for every $K\Subset\Omega$ compact connected, if 
\begin{equation}
\label{ipotesi2}
u\not\equiv 0\qquad \mbox{ in } K,
\end{equation}
then $u>0$ almost everywhere in $K$.
\par
Let $K\Subset\Omega$ be a connected compact set, then $K\subset\{x\in\Omega\, :\, \mathrm{dist}(x,\partial\Omega)>2\,r\}$ for some $r>0$. We then observe that $K$ can be covered by a finite number of balls $B_{r/2}(x_1),\dots B_{r/2}(x_k)$ such that $x_i\in K$ and
\begin{equation}
\label{incatenate}
|B_{r/2}(x_i)\cap B_{r/2}(x_{i+1})|>0,\quad i=1,\dots, k-1.
\end{equation}
Let us now suppose that $u=0$ on a subset of $K$ with positive measure. Then for some $i\in\{1,\dots,k-1\}$, we have that the set 
\[
Z:=\{x\in B_{r/2}(x_i)\,: \, u(x)=0\},
\]
has positive measure. We define
\[
F_\delta(x)=\log\left(1+\frac{u(x)}{\delta}\right),\qquad x\in B_{r/2}(x_i),
\]
for $\delta>0$ and claim that the following Poincar\'e inequality holds true
\begin{equation}
\label{sobs}
\int_{B_{r/2}(x_i)} |F_\delta|^p\, dx\le  \frac{r^{N+s\,p}}{|Z|}\,
\int_{B_{r/2}(x_i)}\int_{B_{r/2}(x_i)}\frac{|F_\delta(x)-F_\delta(y)|^p}{|x-y|^{N+s\,p}}\, dx\,dy.
\end{equation}
Indeed, observe that 
\[
F_\delta(x)=0\qquad \mbox{ for every } x\in Z\,,
\]
hence for every $x\in B_{r/2}(x_i)$ and $y\not=x$ with $y\in Z$, we get
\[
|F_\delta(x)|^p=\frac{|F_\delta(x)-F_\delta(y)|^p}{|x-y|^{N+s\,p}}\,|x-y|^{N+s\,p}\,.
\]
Now integrating with respect to $y\in Z$ gives
\[
|Z|\, |F_\delta(x)|^p\le\left(\max_{x,y\in B_{r/2}(x_i)} |x-y|^{N+s\,p}\right)\,\int_{B_{r/2}(x_i)}\frac{|F_\delta(x)-F_\delta(y)|^p}{|x-y|^{N+s\,p}}\, dy\,,
\]
which proves \eqref{sobs} up to an integration with respect to $x\in B_{r/2}(x_i)$.
\vskip.2cm\noindent
We now observe that 
\[
\left|\log\left(\frac{\delta+u(x)}{\delta+u(y)}\right)\right|^p=
\left| F_\delta(x) - F_\delta(y)\right|^p,
\]
thus if we combine \eqref{sobs}, \eqref{Cacciotail} and observe that $u_-\equiv 0$, we get
\begin{equation}
\label{BMO}
\begin{split}
\int_{B_{r/2}(x_i)} \left|\log\left(1+\frac{u}{\delta}\right)\right|^p&\, dx\le  C\, \frac{r^{2\,N}}{|Z|},
\end{split}
\end{equation}
with $C$ independent of $\delta$.
By letting $\delta$ go to $0$ in \eqref{BMO}, we can then infer 
\[
u=0\qquad \mbox{ almost everywhere in } B_{r/2}(x_i).
\]
By using property \eqref{incatenate}, we can repeat the previous argument for the balls $B_{r/2}(x_{i-1})$ and $B_{r/2}(x_{i+1})$ and so on, up to obtain that $u=0$ almost everywhere on $K$. This clearly contradicts \eqref{ipotesi2}, thus $u>0$ almost everywhere in $K$.
\vskip.2cm\noindent
Let us now assume \eqref{ipotesi}. Since $\Omega$ is connected, there exists a sequence of connected compact sets $K_n\Subset\Omega$ such that 
\[
|\Omega\setminus K_n|<\frac{1}{n}\qquad \mbox{ and }\quad u\not\equiv 0\mbox{ in }K_n.
\]
Then, by the first part of the proof $u>0$ almost everywhere on each $K_n$. 
By letting $n$ go to $\infty$, we get the conclusion.
\end{proof}

\section{Optimal constant for the fractional Hardy inequality}
\label{sec:B}

In Section \ref{sec:6} we used that $u(x)=|x|^{-\beta}$ is a solution of
\[
(-\Delta_p)^s u=C(\beta)\, \frac{u^{p-1}}{|x|^{s\,p}},\qquad \mbox{ in }\mathbb{R}^N\setminus\{0\}.
\] 
In this appendix we discuss some features of the constant 
\[
C(\beta)=2\,\int_{\mathbb{R}^N} \frac{\left||x|^{-\beta}-|y|^{-\beta}\right|^{p-2}\,\left(|x|^{-\beta}-|y|^{-\beta}\right)}{|x-y|^{N+s\,p}}\, dy,\qquad x\in\mathbb{S}^{N-1},
\]
and determines the best constant in \eqref{hardy_frazia}, see equation \eqref{costante} below. Computations are very much the same as in the paper \cite{FS} by Frank and Seiringer, up to some simplifications. For simplicity we focus on the case $N\ge 2$ and $s\,p<N$.
\begin{lm}
\label{lm:puntoperpunto}
Let $N\ge 2$, $0<s<1$ and $1<p<\infty$. For every $0<\varrho<1$ the function
\[
\mathcal{G}(\beta)=\left[1-\varrho^{N-s\,p-\beta\,(p-1)}\right]\,\left[1-\varrho^\beta\right]^{p-1},\qquad \beta>0,
\]
is maximal for $\beta=(N-s\,p)/p$.
\end{lm}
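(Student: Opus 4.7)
Since $(1-\varrho^\beta)^{p-1}>0$ for every $\beta>0$ and $\varrho\in(0,1)$, the sign of $\mathcal{G}(\beta)$ equals the sign of $1-\varrho^{N-s\,p-\beta\,(p-1)}$. As $\varrho<1$, this factor is positive precisely when $N-s\,p-\beta\,(p-1)>0$, i.e.\ on the interval
\[
I=\left(0,\frac{N-s\,p}{p-1}\right).
\]
Note that $\mathcal{G}$ vanishes continuously at both endpoints of $I$, since $1-\varrho^\beta\to 0$ as $\beta\to 0^+$ and $1-\varrho^{N-s\,p-\beta\,(p-1)}\to 0$ as $\beta\to((N-s\,p)/(p-1))^-$. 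The candidate maximizer $\beta_*=(N-s\,p)/p$ belongs to $I$, so it is enough to locate the critical points of $\mathcal{G}$ (equivalently, of $\log\mathcal{G}$) in $I$ and check there is only one.

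The plan is to differentiate
\[
\log\mathcal{G}(\beta)=\log\left(1-\varrho^{N-s\,p-\beta\,(p-1)}\right)+(p-1)\,\log\left(1-\varrho^\beta\right).
\]
A direct computation, using that the derivative of $\beta\mapsto\varrho^{a-\beta\,(p-1)}$ is $-(p-1)\,\log\varrho\cdot \varrho^{a-\beta\,(p-1)}$, yields
\[
\frac{d}{d\beta}\log\mathcal{G}(\beta)=(p-1)\,\log\varrho\,\left[\frac{\varrho^{N-s\,p-\beta\,(p-1)}}{1-\varrho^{N-s\,p-\beta\,(p-1)}}-\frac{\varrho^\beta}{1-\varrho^\beta}\right].
\]
Since $\log\varrho<0$ and $p-1>0$, this derivative vanishes if and only if the bracket vanishes. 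The function $t\mapsto t/(1-t)$ is strictly increasing on $(0,1)$, and both $\varrho^{N-s\,p-\beta\,(p-1)}$ and $\varrho^\beta$ lie in $(0,1)$ for $\beta\in I$, so the bracket vanishes precisely when
\[
\varrho^{N-s\,p-\beta\,(p-1)}=\varrho^\beta.
\]
Because $\varrho\neq 1$, this forces
\[
N-s\,p-\beta\,(p-1)=\beta\qquad\Longleftrightarrow\qquad \beta=\frac{N-s\,p}{p}=\beta_*,
\]
so $\beta_*$ is the unique critical point of $\mathcal{G}$ in $I$.

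Finally, to identify the critical point as a maximum (rather than a minimum), I would observe that since $\mathcal{G}>0$ on $I$ and $\mathcal{G}$ vanishes at both endpoints, the unique interior critical point must be the global maximizer on $I$. Outside $I$, $\mathcal{G}$ is nonpositive, so $\beta_*$ is a maximizer on all of $(0,+\infty)$ as claimed. There is no real obstacle in this argument; the only delicate point is the monotonicity argument that reduces the critical-point equation to the linear equation $N-s\,p-\beta\,(p-1)=\beta$, which is immediate once one writes the derivative in the factored form above.
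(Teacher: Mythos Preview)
Your argument is correct and follows essentially the same route as the paper: both proofs differentiate and reduce the first-order condition to the comparison $\varrho^{N-s\,p-\beta\,(p-1)}=\varrho^\beta$, yielding $\beta=(N-s\,p)/p$. The only cosmetic difference is that the paper differentiates $\mathcal{G}$ itself and reads off the sign of $\mathcal{G}'$ on all of $(0,\infty)$ (so one sees directly that $\mathcal{G}$ increases up to $\beta_*$ and decreases afterwards), whereas you differentiate $\log\mathcal{G}$ on the positivity interval $I$, invoke the monotonicity of $t\mapsto t/(1-t)$, and then handle the complement of $I$ by the sign observation $\mathcal{G}\le 0$ there; both variants are equally valid.
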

\begin{proof}
We just have to differentiate the function $\mathcal{G}$. Indeed, we have
\[
\begin{split}
\mathcal{G}'(\beta)&=(p-1)\,\log\varrho\, \varrho^{N-s\,p-\beta\,(p-1)}\,\left[1-\varrho^\beta\right]^{p-1}\\
&-(p-1)\,\varrho^\beta\, \log\varrho\,\left[1-\varrho^{N-s\,p-\beta\,(p-1)}\right]\,\left[1-\varrho^\beta\right]^{p-2},
\end{split}
\] 
so that
\[
\mathcal{G}'(\beta)\ge 0\quad \Longleftrightarrow\quad \varrho^{N-s\,p-\beta\,(p-1)}\,\left[1-\varrho^\beta\right]-\varrho^\beta\,\left[1-\varrho^{N-s\,p-\beta\,(p-1)}\right]\le 0
\]
that is if and only if $\beta$ is such that  
\[
\varrho^{N-s\,p-\beta\,(p-1)}\le \varrho^\beta.
\]
By passing to the logarithm, we obtain the assertion.
\end{proof}
We can then determine the maximal values of $C(\beta)$.
\begin{lm}
Let $N\ge 2$, $0<s<1$ and $1<p<\infty$. For every $\beta>0$ we have
\[
0<C(\beta)\le C\left(\frac{N-s\,p}{p}\right).
\]
\end{lm}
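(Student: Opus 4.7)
The plan is to reduce the computation of $C(\beta)$ to a single integral whose integrand is, pointwise in $y$, exactly the function $\mathcal{G}(\beta)$ from the preceding Lemma \ref{lm:puntoperpunto}, and then invoke that lemma pointwise. By Lemma \ref{lm:coddio} the integral defining $C(\beta)$ is constant on $\mathbb{S}^{N-1}$, so I fix once and for all some $x$ with $|x|=1$, making $|x|^{-\beta}=1$. I then split the domain of integration as $\{|y|<1\}\cup\{|y|>1\}$, on which the sign of the quantity $||x|^{-\beta}-|y|^{-\beta}|^{p-2}(|x|^{-\beta}-|y|^{-\beta})$ is opposite: negative on the inner ball, positive on the outer region.

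The key step is then to apply the inversion $y\mapsto y/|y|^2$ to the integral over $\{|y|<1\}$. Using that $|x|=1$ this map produces the standard identities $|x-y/|y|^2|=|x-y|/|y|$ and Jacobian $|y|^{-2N}$, and transforms $|y|^{-\beta}$ into $|y|^\beta$ after renaming. Merging the two pieces one lands on a single integral over $\{|y|>1\}$; using the algebraic identity $(1-|y|^{-\beta})^{p-1}=(|y|^\beta-1)^{p-1}|y|^{-\beta(p-1)}$, the numerator factors as
\[
(1-\varrho^\beta)^{p-1}\Big(1-\varrho^{N-sp-\beta(p-1)}\Big),\qquad \varrho=\frac{1}{|y|}\in(0,1),
\]
which is precisely the expression $\mathcal{G}(\beta)$ of Lemma \ref{lm:puntoperpunto}. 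The denominator $|x-y|^{N+sp}|y|^{N-sp}$ that appears is independent of $\beta$ and strictly positive on $\{|y|>1\}$. Therefore the \emph{whole integrand} is, for every fixed $y$, a positive multiple of $\mathcal{G}(\beta)$, and Lemma \ref{lm:puntoperpunto} yields the pointwise bound by $\mathcal{G}((N-sp)/p)$. Integration then gives the upper bound $C(\beta)\le C((N-sp)/p)$.

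Strict positivity of $C(\beta)$ follows from the same rewriting: at $\beta=(N-sp)/p$ both exponents $\beta$ and $N-sp-\beta(p-1)=(N-sp)/p$ are strictly positive, so each of the two factors in $\mathcal{G}_\varrho(\beta)$ is strictly positive for every $\varrho\in(0,1)$; hence the integrand is strictly positive throughout $\{|y|>1\}$ and $C((N-sp)/p)>0$. The same sign analysis applies whenever $0<\beta$ is such that $N-sp-\beta(p-1)>0$, which in particular contains a neighborhood of the maximizer. The main obstacle is a purely bookkeeping one, namely performing the inversion change of variables without losing track of the various powers of $|y|$ that combine to produce the $\mathcal{G}$ structure; once the integrand has been put in the form $\mathcal{G}_{1/|y|}(\beta)$ times a $\beta$-independent positive weight, both conclusions of the lemma reduce at once to Lemma \ref{lm:puntoperpunto}.
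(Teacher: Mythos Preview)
Your approach is essentially the same as the paper's: split the integral at $|y|=1$, send one half to the other by an inversion, and recognize the resulting integrand as $\mathcal{G}_\varrho(\beta)$ times a $\beta$-independent positive weight, so that Lemma~\ref{lm:puntoperpunto} applies pointwise. The only cosmetic difference is that the paper first passes to polar coordinates, introduces the angular kernel $\Phi(\varrho)$ of \eqref{fihona}, and then uses the one-dimensional change $\varrho\mapsto 1/\varrho$ together with $\Phi(1/r)=r^{N+sp}\Phi(r)$, whereas you work directly in $\mathbb{R}^N$ via the Kelvin inversion $y\mapsto y/|y|^2$ and the identity $|x-y/|y|^2|=|x-y|/|y|$ for $|x|=1$; the paper's $\Phi$-identity is precisely the spherical average of yours, so the two computations are the same in different coordinates.

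One small bookkeeping slip: if you invert the inner ball and combine on $\{|y|>1\}$ as you describe, the $\beta$-free weight is $|x-y|^{-(N+sp)}$, not $|x-y|^{-(N+sp)}|y|^{-(N-sp)}$; the latter is what you get if you invert the outer region and combine on $\{|y|<1\}$ with $\varrho=|y|$. Either route gives the representation $\mathcal{G}_\varrho(\beta)$ times a positive $\beta$-independent kernel, so the argument goes through unchanged. Your discussion of strict positivity matches the paper's: both establish $C(\beta)>0$ only in the range $0<\beta<(N-sp)/(p-1)$ where the bracket $1-\varrho^{N-sp-\beta(p-1)}$ is positive, which is all that is needed for the Hardy application.
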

\begin{proof}
We recall that
\[
C(\beta)=2\,\int_{\mathbb{R}^N} \frac{\left||x|^{-\beta}-|y|^{-\beta}\right|^{p-2}\, \left(|x|^{-\beta}-|y|^{-\beta}\right)}{|x-y|^{N+s\,p}}\, dy,\qquad \mbox{ for every }x\in\mathbb{S}^{N-1},
\]
and the right-hand side is independent of $x\in\mathbb{S}^{N-1}$.
Thus we have
\[
C(\beta)=\frac{2}{\mathcal{H}^{N-1}(\mathbb{S}^{N-1})}\,\int_{\mathbb{S}^{N-1}}\int_{\mathbb{R}^N} \frac{\left||x|^{-\beta}-|y|^{-\beta}\right|^{p-2}\, \left(|x|^{-\beta}-|y|^{-\beta}\right)}{|x-y|^{N+s\,p}}\, dy\, d\mathcal{H}^{N-1}(x).
\]
We observe that for every $y\in\mathbb{R}^N\setminus\{0\}$ we can write
\[
\mathbb{S}^{N-1}=\bigcup_{t\in[-1,1]} \Sigma_t(y),
\]
where
\[
\Sigma_t=\left\{x\in\mathbb{S}^{N-1}\, :\, \left\langle x,\frac{y}{|y|}\right\rangle=t\right\}\simeq \sqrt{1-t^2}\,\mathbb{S}^{N-2}. 
\]
By using this and exchanging the order of integration, we then obtain
\[
\begin{split}
C(\beta)=\frac{2}{\mathcal{H}^{N-1}(\mathbb{S}^{N-1})}&\,\int_{\mathbb{R}^N}\int_{\mathbb{S}^{N-1}} \frac{\left||x|^{-\beta}-|y|^{-\beta}\right|^{p-2}\, \left(|x|^{-\beta}-|y|^{-\beta}\right)}{|x-y|^{N+s\,p}}\, d\mathcal{H}^{N-1}(x)\,dy\\
&=\frac{\mathcal{H}^{N-2}(\mathbb{S}^{N-2})}{\mathcal{H}^{N-1}(\mathbb{S}^{N-1})}\,\int_{\mathbb{R}^N}\Big[|1-|y|^{-\beta}|^{p-2}\, (1-|y|^{-\beta})\\
&\times\,\int_{-1}^1 \frac{(1-t^2)^\frac{N-3}{2}}{(1-2\,t\,|y|+|y|^2)^\frac{N+s\,p}{2}}\, \,dt\Big]\,dy
\end{split}
\]
where we used that
\[
|x-y|=\sqrt{1-2\, t\,|y|+|y|^2},\qquad \mbox{ for every } x\in \Sigma_t.
\]
We now set
\begin{equation}
\label{fihona}
\Phi(\varrho)=\mathcal{H}^{N-2}(\mathbb{S}^{N-2})\,\int_{-1}^1 \frac{(1-t^2)^\frac{N-3}{2}}{(1-2\,t\,\varrho+\varrho^2)^\frac{N+s\,p}{2}}\, dt,
\end{equation}
then by using polar coordinates we get
\[
\begin{split}
C(\beta)&= 2\,\int_0^\infty \varrho^{N-1}\, \left|1-\varrho^{-\beta}\right|^{p-2}\, (1-\varrho^{-\beta})\, \Phi(\varrho)\,d\varrho\\
&=-2\,\int_0^1 \varrho^{N-1}\,\left|1-\varrho^{-\beta}\right|^{p-1}\, \Phi(\varrho)\,d\varrho\\
&+2\,\int_1^\infty \varrho^{N-1}\,\left|1-\varrho^{-\beta}\right|^{p-1}\, \Phi(\varrho)\,d\varrho.
\end{split}
\]
We now perform the change of variable $\varrho=r^{-1}$ in the second integral and observe that
\[
\Phi(1/r)=r^{N+s\,p}\,\Phi(r).
\]
Thus we obtain
\[
\begin{split}
C(\beta)&=-2\,\int_0^1 \varrho^{N-1}\,\left|1-\varrho^{-\beta}\right|^{p-1}\, \Phi(\varrho)\,d\varrho+2\,\int_0^1 \varrho^{-1+s\,p}\,\left|1-\varrho^{\beta}\right|^{p-1}\, \Phi(\varrho)\,d\varrho\\
&=-2\,\int_0^1 \varrho^{N-1-\beta\,(p-1)}\,\left|\varrho^\beta-1\right|^{p-1}\, \Phi(\varrho)\,d\varrho+2\,\int_0^1 \varrho^{-1+s\,p}\,\left|1-\varrho^{\beta}\right|^{p-1}\, \Phi(\varrho)\,d\varrho\\
&=2\,\int_0^1 \varrho^{s\,p-1}\,\left[1-\varrho^{N-s\,p-\beta\,(p-1)}\right]\,\left|1-\varrho^{\beta}\right|^{p-1}\, \Phi(\varrho)\,d\varrho.
\end{split}
\]
We now observe that the term into square brackets is positive if 
\[
\beta\le \frac{N-s\,p}{p-1}.
\]
Moreover, thanks to Lemma \ref{lm:puntoperpunto} the integrand is maximal for 
\[
\beta=\frac{N-s\,p}{p},
\]
and thus we get the conclusion.
\end{proof}
\begin{oss}
Observe that we have 
\begin{equation}
\label{costante}
C\left(\frac{N-s\,p}{p}\right)=2\,\int_0^1 \varrho^{s\,p-1}\,\left[1-\varrho^\frac{N-s\,p}{p}\right]^{p}\, \Phi(\varrho)\,d\varrho,
\end{equation}
where the function $\Phi$ is defined in \eqref{fihona}. Thus this is the best constant in the Hardy inequality \eqref{hardy_frazia} (see \cite[Section 3.3]{FS} for more details).
\end{oss}


\begin{thebibliography}{100}

\bibitem{ACP} B. Abdellaoui, E. Colorado, I. Peral, Some improved Caffarelli-Kohn-Nirenberg inequalities, Calc. Var. Partial Differential Equations {\bf 23} (2005), 327--345.
\bibitem{AH} W. Allegretto, Y. X. Huang, A Picone's identity for the $p-$Laplacian and applications, Nonlinear Anal. {\bf 32} (1998), 819--830. 
\bibitem{BK} M. Belloni, B. Kawohl, A direct uniqueness proof for equations involving the $p-$Laplace operator, Manuscripta Math. {\bf 109} (2002), 229--231.
\bibitem{BB}J. D. Benamou, Y. Brenier, A computational fluid mechanics solution to the Monge--Kantorovich mass transfer problem, Numer. Math. {\bf84} (2000), 375--393.
\bibitem{BBL} R. Benguria, H. Brezis, E. H. Lieb, The Thomas--Fermi--von Weizs\"acker theory of atoms and molecules, Commun. Math. Phys. {\bf 79} (1981), 167--180. 
\bibitem{BD} K. Bogdan, B. Dyda, The best constant in a fractional Hardy inequality, Math. Nachr. {\bf 284} (2011), 629--638.
\bibitem{BF} L. Brasco, G. Franzina, A note on positive eigenfunctions and hidden convexity, Arch. Math. {\bf 99} (2012), 367--374. 
\bibitem{BLP} L. Brasco, E. Lindgren, E. Parini, The fractional Cheeger problem, to appear on Interfaces Free Bound. {\bf 16} (2014), available at {\tt http://cvgmt.sns.it/paper/2225/}
\bibitem{Br} Y. Brenier, Extended Monge-Kantorovich theory, in {\it Optimal transportation and applications}, 91--121, Lecture Notes in Math. {\bf 1813}, Springer, Berlin, 2003. 
\bibitem{CCN} P. Cardaliaguet, G. Carlier, B, Nazaret, Geodesics for a class of distances in the space of probability measures, Calc. Var. Partial Differential Equations {\bf 48} (2013), 395--420.
\bibitem{DdPW} J. Davila, M. del Pino, J. Wei, Nonlocal Minimal Lawson Cones, preprint (2014),
available at {\tt http://arxiv.org/abs/1303.0593}
\bibitem{DP} B. Devyver, Y. Pinchover, Optimal $L^p$ Hardy-type inequalities, preprint (2013), available at {\tt http://arxiv.org/abs/1312.6235}
\bibitem{DKP} A. Di Castro, T. Kuusi, G. Palatucci, Local behavior of fractional $p$-minimizers, preprint (2013), available at {\tt http://cvgmt.sns.it/paper/2379/}
\bibitem{DKP2} A. Di Castro, T. Kuusi, G. Palatucci, Nonlocal Harnack inequalities, to appear on J. Funct. Anal., {\tt doi:10.1016/j.jfa.2014.05.023}
\bibitem{DNS} J. Dolbeault, B. Nazaret, G. Savar\'e, A new class of transport distances between measures, Calc. Var. Partial Differential Equations {\bf 34} (2009), 193--231. 
\bibitem{DF} B. Dyda, R.L. Frank, Fractional Hardy-Sobolev-Maz'ya inequality for domains, Studia Math. {\bf 208} (2012), 151--166.
\bibitem{FS} R. L. Frank, R. Seiringer, Non-linear ground state representations and sharp Hardy inequalities, J. Funct. Anal. {\bf 255} (2008), 3407--3430.
\bibitem{FL} G. Franzina, P. D. Lamberti, Existence and uniqueness for a $p-$Laplacian nonlinear eigenvalue problem, Electron. J. Differential Equations (2010), No. 26, 10 pp.
\bibitem{FP} G. Franzina, G. Palatucci, Fractional $p-$eigenvalues, Riv. Mat. Univ. Parma {\bf 5} (2014).
\bibitem{KL} B. Kawohl, P. Lindqvist, Positive eigenfunctions for the $p-$Laplace operator revisited, Analysis {\bf 26} (2006), 539--544.
\bibitem{Ja} J. Jaro\v{s}, $A-$harmonic Picone's identity with applications, to appear on Ann. Mat. Pura Appl., {\tt doi:10.1007/s10231-013-0394-y} 
\bibitem{LL} E. Lindgren, P. Lindqvist, Fractional eigenvalues, Calc. Var. Partial Differential Equations {\bf 49} (2014), 795--826.
\bibitem{Na} A. I. Nazarov, The one-dimensional character of an extremum point of the Friedrichs inequality in spherical and plane layers, J. Math. Sci. {\bf 102} (2000), 4473--4486.
\bibitem{OT} M. \^Otani, T. Teshima, On the first eigenvalue of some quasilinear elliptic equations, Proc. Japan Acad. Ser. A Math. Sci. {\bf 64} (1988), 8--10.
\bibitem{Pi} M. Picone, Sui valori eccezionali di un parametro da cui dipende un'equazione differenziale lineare del secondo ordine, Ann. Scuola Norm. Sup. Pisa {\bf 11} (1910), 1--144.
\bibitem{Sc} R. Schneider, Convex bodies: the Brunn-Minkowski theory, Cambridge University Press 1993.
\bibitem{TTU} P. Tak\'a\v{c}, L. Tello, M. Ulm, Variational problems with a $p-$homogeneous energy, Positivity {\bf 6} (2002), 75--94.
\bibitem{Tr} N. S. Trudinger, On Harnack type inequalities and their applications to quasilinear elliptic equations, Comm. Pure Appl. Math. {\bf 20} (1967), 721--747. 
\bibitem{Van} J. Van Schaftingen, Anisotropic symmetrization, Ann. Inst. H. Poincar\'e Anal. Non Lin\'eaire {\bf 23} (2006), 539--565.

\end{thebibliography}
\end{document}